\newtheorem{thm}{Theorem}
\newtheorem{cor}{Corollary}
\newtheorem{lem}{Lemma}
\newtheorem{defn}{Definition}
\newtheorem{exam}{Example}
\numberwithin{equation}{section} \numberwithin{thm}{section}
\numberwithin{lem}{section} \numberwithin{problem}{section}
\numberwithin{cor}{section}
\newcommand{\A}{\mathcal{A}}
\newcommand{\Z}{\mathbb{Z}}
\newcommand{\F}{\mathbb F}
\begin{document}
\title{Combinatorial problems in finite fields and Sidon sets
 }
\author{Javier Cilleruelo}
\address{Instituto de Ciencias Matem\'aticas (CSIC-UAM-UC3M-UCM) and
Departamento de Matem\'aticas\\
Universidad Aut\'onoma de Madrid\\
28049, Madrid, Espa\~na} \email{franciscojavier.cilleruelo@uam.es}
\begin{abstract}
We use Sidon sets to present an elementary method to study  some
combinatorial problems in finite fields, such as sum product
estimates, solubility of some equations and distribution of
sequences in small intervals. We obtain classic and more recent
results avoiding the use of exponential sums, the usual tool to deal
with these problems.
\end{abstract}
\maketitle
\section{Introduction}
The aim of the present work is to introduce a new elementary  method
to study a class of combinatorial problems in finite fields:
incidence problems, sum-product estimates, solubility of some
equations, distribution of sequences in small intervals, etc.

The main tool in our approach are Sidon sets, which are important
objects in combinatorial number theory.

In Section \S 2 we present Theorem \ref{BB}, which is a new result
about Sidon sets and the main tool in our method.  To illustrate how
this method works, we include in this section two easy application
of this theorem. The first one recovers a result of Vinh \cite{V}
about the number of incidences between $P$ points and $L$ lines in a
field $\F_q$. The second one proves that if $A=\{g^x:\ 0\le x\le
(\sqrt 2+o(1))p^{3/4}\}$ and $g$ is a primitive root modulo $p$,
then $A-A=\Z_p$. This improves previous results obtained by
Garaev-Kueh\cite{Ga3}, Konyagin\cite{K} and Garc\'{\i}a\cite{Garcia}.

Section \S 3 is devoted to sum-product estimates. Garaev \cite{Ga1}
used character sums to give the nontrivial lower estimate
$\max(|A+A|,|AA|)\gg \min (\sqrt{|A|q},|A|^2/\sqrt q)$. Theorem
\ref{BB'}, which is an easy consequence of Theorem \ref{BB}, gives a
nontrivial upper bound for the number of elements of a dense Sidon
set in an arbitrary set $B$ when $|B+B|$ is small.  We use this
upper bound to give a quick proof of Garaev's estimate and related
results.

S\'{a}rk\H ozy \cite{Sar1,Sar2} proved the solubility of the equations
$x_1x_2+x_3x_4=1 $ and $x_1x_2=x_3+x_4,\ x_i\in A_i$ for arbitrary
sets $A_i\subset \F_p$ when $|A_1||A_2||A_3||A_4|\gg p^3$. This
result was extended to any field $\F_q$ in \cite{SG}. The proof is
based in estimates of exponential sums and they asked for an
elementary algebraic proof of the solubility of these equations
(problem 3 of \cite{CGS}). Our method provides a proof of this kind.
Actually, Sark\H ozy's equations  are special cases of more general
equations which we study in section \S 4.

In section \S 5, we apply our method to study the distribution of
some sequences in $\Z_p$. As an example we prove that if $g$ is a
primitive root modulo $p$, then for any intervals $I,J$ and any
positive integer $r$ we have
$$|\{(x,y)\in I\times J:\ g^x-g^y\equiv 1 \pmod p\}|=\frac{|I||J|}{p}+\theta_r\Bigl (\Bigl (\frac{|I||J|}{p^{3/2}}\Bigr )^{1/r}+1\Bigr )\sqrt p,$$
with $|\theta_r|\le 4^r$. The error term is smaller than the error
term obtained by Garaev \cite{Ga2}.

\section{Sidon sets}
Let $G$ be a finite abelian group. For any sets $A,B\subset G$ and
$x\in G$, we write $r_{A-B}(x)$ for the number of representations of
$x=a-b,\ a\in A,\ b\in B$. We have the well known identities
\begin{align} \sum_{x\in G}r_{A-B}(x)&=|A||B|,\label{XY}\\
\sum_{x\in G}r_{A-B}^2(x)&=\sum_{x\in G}r_{A-A}(x)r_{B-B}(x).
\label{AE}
\end{align}

\begin{defn}We say that a set $\A\subset G$ is a Sidon set if
$r_{\A- \A}(x)\le 1$ whenever $x\ne 0$.\end{defn} By counting the
number of differences $a-a'$, we can see that if $\A$ is a Sidon
set, then $|\A|<\sqrt{|G|}+1/2$. The most interesting Sidon sets are
those which have large cardinality, that is,
$|\A|=\sqrt{|G|}-\delta$  where $\delta$ is a small number. We state
our main theorem.

\begin{thm}\label{BB}Let $\A$ be a Sidon set in a finite abelian group $G$ with $|\A|=\sqrt{|G|}-\delta$. Then, for all $B,B'\subset G$
we have
\begin{equation*}
|\{(b,b')\in B\times B',\ b+b'\in
\A\}|=\frac{|\A|}{|G|}|B||B'|+\theta (|B||B'|)^{1/2}|G|^{1/4},
\end{equation*}
with $|\theta|< 1+\frac{|B|}{|G|}\max(0,\delta)$.
\end{thm}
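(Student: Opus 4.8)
The plan is to count the quantity $N := |\{(b,b')\in B\times B' : b+b'\in\A\}|$ by connecting it to the representation functions $r_{A-B}(x)$ introduced in the excerpt. The key observation is that $N$ counts triples effectively: a pair $(b,b')$ contributes when $b+b'=a$ for some $a\in\A$, so I would write $N=\sum_{a\in\A} r_{B-(-B')}(a)$, or more directly introduce the indicator of $\A$ and expand. The cleanest route is to set $f(x)=1_{\A}(x)$ and write $N=\sum_{b\in B}\sum_{b'\in B'}1_{\A}(b+b')$, then compare this to its ``expected'' value $\frac{|\A|}{|G|}|B||B'|$ coming from treating $\A$ as a random set of its size. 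So I define the discrepancy $D := N-\frac{|\A|}{|G|}|B||B'|$ and aim to bound $|D|$ by $(|B||B'|)^{1/2}|G|^{1/4}$ up to the stated factor.

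First I would apply Cauchy--Schwarz to reduce the bound on $D$ to a second-moment computation. Writing $N=\sum_{x\in G}1_{\A}(x)\,r_{B-(-B')}(x)$ where $r_{B-(-B')}(x)=|\{(b,b'): b+b'=x\}|$, and noting $\sum_x r_{B-(-B')}(x)=|B||B'|$, the discrepancy becomes $D=\sum_{x\in\A}\bigl(r_{B-(-B')}(x)-\frac{|B||B'|}{|G|}\bigr)$ plus a correction term from the elements outside $\A$. Applying Cauchy--Schwarz over $x\in\A$ gives $|D|\le |\A|^{1/2}\bigl(\sum_{x\in G}(r(x)-\tfrac{|B||B'|}{|G|})^2\bigr)^{1/2}$, so the core of the argument is estimating $\sum_x r^2(x)-\frac{(|B||B'|)^2}{|G|}$, i.e.\ the variance of the representation function. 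This is where the Sidon hypothesis must enter.

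The main obstacle, and the heart of the proof, is controlling the second moment $\sum_{x\in G} r_{B-(-B')}^2(x)$. Here I would invoke the identity \eqref{AE}, which rewrites $\sum_x r_{B-B'}^2(x)=\sum_x r_{B-B}(x)\,r_{B'-B'}(x)$; the Sidon property of $\A$ enters because the sum in $D$ is restricted to $x\in\A$, and the defining condition $r_{\A-\A}(x)\le1$ for $x\ne0$ is exactly what lets me bound sums indexed by differences of $\A$-elements. The plan is to expand the squared representation count as a count of quadruples $(b_1,b_1',b_2,b_2')$ with $b_1+b_1'=b_2+b_2'$, equivalently $b_1-b_2 = b_2'-b_1'$, and to separate the diagonal contribution ($b_1=b_2$, giving the main term $|B||B'|$) from the off-diagonal. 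Substituting $\A$ as a Sidon set should force each nonzero difference value to be hit at most once, collapsing the off-diagonal sum to something of size $O(|B||B'|)$ rather than the generic $O((|B||B'|)^2/|G|)$, and yielding the clean factor $|G|^{1/4}$ after taking the square root with $|\A|^{1/2}\le |G|^{1/4}$.

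Finally I would assemble the pieces and track constants to recover the precise coefficient $|\theta|<1+\frac{|B|}{|G|}\max(0,\delta)$. The refinement from a crude $O(\cdot)$ constant to this explicit bound comes from carefully accounting for the $x=0$ term and the exact value $|\A|=\sqrt{|G|}-\delta$ when substituting into $|\A|^{1/2}\le|G|^{1/4}$; the sign of $\delta$ is what produces the $\max(0,\delta)$ correction, since $\delta<0$ (a larger-than-generic Sidon set) only helps. I expect the variance estimate to do all the real work, with the constant-chasing at the end being routine but requiring care to land on the stated inequality rather than a weaker one.
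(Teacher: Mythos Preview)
Your plan has a genuine gap, and it sits exactly at the step where you try to bring in the Sidon hypothesis. After writing $D=\sum_{x\in\A}\bigl(r(x)-\tfrac{|B||B'|}{|G|}\bigr)$ with $r(x)=r_{B-(-B')}(x)$ and applying Cauchy--Schwarz over $x\in\A$, the variance you are left with is
\[
\sum_{x\in G}\Bigl(r(x)-\frac{|B||B'|}{|G|}\Bigr)^2=\sum_{x\in G}r_{B-B}(x)\,r_{B'-B'}(x)-\frac{(|B||B'|)^2}{|G|},
\]
and this expression contains no trace of $\A$. The Sidon property therefore cannot ``collapse the off-diagonal'' here: the differences that appear are differences in $B$ and in $B'$, not in $\A$. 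Without extra structure on $B,B'$ this variance need not be small; for instance if $B=B'$ is a subgroup of size $|G|^{1/2}$, then $\sum_x r^2(x)=|B|^3$, and your bound becomes $|\A|^{1/2}\cdot|G|^{3/4}\asymp |G|$, much larger than the claimed $(|B||B'|)^{1/2}|G|^{1/4}\asymp|G|^{3/4}$.

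The paper's proof applies Cauchy--Schwarz in the other direction. One writes
\[
N=\sum_{b'\in B'}r_{\A-B}(b'),\qquad D=\sum_{b'\in B'}\Bigl(r_{\A-B}(b')-\frac{|\A||B|}{|G|}\Bigr),
\]
and applies Cauchy--Schwarz over $b'\in B'$. The resulting second moment is $\sum_{x\in G}\bigl(r_{\A-B}(x)-\tfrac{|\A||B|}{|G|}\bigr)^2$, which by \eqref{AE} equals $\sum_x r_{\A-\A}(x)\,r_{B-B}(x)-\frac{(|\A||B|)^2}{|G|}$. Now the Sidon condition $r_{\A-\A}(x)\le 1$ for $x\ne 0$ gives the bound $|\A||B|+(|B|^2-|B|)-\frac{(|\A||B|)^2}{|G|}$, and substituting $|\A|=|G|^{1/2}-\delta$ yields precisely $|B||G|^{1/2}\bigl(1+2\max(0,\delta)\tfrac{|B|}{|G|}\bigr)$ after simplification. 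The point to take away: to exploit that $\A$ is Sidon via \eqref{AE}, $\A$ must appear \emph{inside} the representation function whose second moment you compute, not merely as the index set for the Cauchy--Schwarz sum.
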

\begin{proof}
Since $\A$ is a Sidon set,
\begin{eqnarray*}\label{Sa}\qquad \sum_{x\in G}r_{B-B}(x)r_{\A-\A}(x) &=&|\A||B|+\sum_{x\ne
0}r_{B-B}(x)r_{\A-\A}(x)\\ &\le &|\A||B|+\sum_{x\ne 0}r_{B-B}(x)
=|\A||B|+|B|^2-|B|\nonumber.\end{eqnarray*} Using this inequality
and identities (\ref{XY}) and (\ref{AE}) we have
\begin{align}\label{Sc}
\sum_{x\in G}\left ( r_{\A-B}(x)-\frac{|\A||B|}{|G|}\right )^2&=
\sum_{x\in G}r_{B-B}(x)r_{\A- \A}(x)-\frac{|\A|^2|B|^2}{|G|}\\ &\le
|B|(|\A|-1)+|B|^2\frac{|G|-|\A|^2}{|G|}.\nonumber
\end{align}
We observe that $$|\{(b,b')\in B\times B',\ b+b'\in
\A\}|-\frac{|B||B'||\A|}{|G|}=\sum_{b'\in B'}\left (
r_{\A-B}(b')-\frac{|\A||B|}{|G|}\right ).$$ Applying the
Cauchy-Schwarz inequality, taking \eqref{Sc} and
$|\A|=|G|^{1/2}-\delta$ into account  we obtain
\begin{eqnarray*}\left |\sum_{b'\in B'}\left (
r_{\A-B}(b')-\frac{|\A||B|}{|G|}\right )\right |^2&\le &|B'|\left
(|B|(|\A|-1)+|B|^2\frac{|G|-|\A|^2}{|G|}\right )\\
&=&|B'||B|\left
(|G|^{1/2}-\delta-1+|B|\frac{\delta(2|G|^{1/2}-\delta)}{|G|}\right
)\\
&<& |B||B'||G|^{1/2}\left (1 + 2\max(0,\delta)\frac{|B|}{|G|}\right
).\end{eqnarray*}
\end{proof}

The Sidon sets we will consider in applications satisfy $\delta\le
1$ and $|B|=o(|G|)$.  In these cases we have $|\theta|\le 1+o(1)$.

\subsection{Examples of dense Sidon sets}
The three families of Sidon sets we will describe next, have maximal
cardinality in their ambient group $G$. Let $g$ be a generator of
  $\F_q^*$.
\begin{exam}\label{exam1}Let $p(x),r(x)\in \F_q[X]$ be polynomials of degree $\le  2$ such that $p(x)-\mu r(x)$ is not a constant for any $\mu\in \F_q$. The set
$$\A=\{(p(x),r(x)):\ x\in \F_q\}$$
is a Sidon set in $\F_q\times \F_q$. In particular, the set
$\A=\{(x,x^2):\ x\in \F_q\}$ is a Sidon set.
\end{exam}
We have to check that when $(e_1,e_2)\ne (0,0)$ the relation
$(p(x_1),r(x_1))-(p(x_2),r(x_2))=(e_1,e_2)$ uniquely determines
$x_1$ and $x_2$.  If $p(x)$ is linear then from $p(x_1)-p(x_2)=e_1$
we obtain $x_1=x_2+\lambda$ for some $\lambda$. Thus,
$r(x_2+\lambda)-r(x_2)=e_2$ is a linear equation and we obtain $x_2$
and then $x_1$. If $p(x)$ is quadratic we consider $\mu$ such that
$p(x)-\mu r(x)$ is a linear polynomial and we proceed as above.

\begin{exam}\label{exam2}
For any generator $g$ of $\F_q^*$, the set
\begin{equation}\label{welch}\A=\{(x,g^{x}):\ x\in \Z_{q-1}\}\end{equation} is a Sidon set in
$\Z_{q-1}\times \F_q$.

Sometimes we will describe this set as $\A=\{(\log x,x):\ x\in
\F_q^*\}$ where $\log x=\log_g x$ is the discrete logarithm.
\end{exam} From $(x_1,g^{x_1})-(x_2,g^{x_2})=(e_1,e_2)\ne (0,0)$ we
have $x_1-x_2\equiv e_1\pmod{q-1}$ and hence $g^{x_1}= g^{e_1+x_2}$.
Putting this in $g^{x_1}-g^{x_2}=e_2$ we get
$g^{x_2}(g^{e_1}-1)=e_2$.

If $e_1=0$ then $e_2=0$, but we have assumed that $(e_1,e_2)\ne
(0,0)$. If $e_1\ne 0$ the last equality determines $x_2$, and then
$x_1$.

\begin{exam}\label{exam3}
For any pair of generators $g_1,g_2$ of $\F_q^*$, the set
\begin{equation}\label{golomb}
\A=\{(x,y)\in \Z_{q-1}\times \Z_{q-1}:\ g_1^x+g_2^y=1\}
\end{equation}
is a Sidon set in $\Z_{q-1}\times \Z_{q-1}$. Since translations
preserve  Sidoness property,  for any $\lambda\ne 0$, the sets
$\A=\{(x,y):\ g_1^x+g_2^y=\lambda\}$ and $\A=\{(x,y):\
g_1^x-g_2^y=\lambda\}$ are also Sidon sets. \end{exam} To see that
$\A$ is a Sidon set we have to prove that if $(e_1,e_2)\ne (0,0)$
then the equation $(x_1,y_1)-(x_2,y_2)= (e_1,e_2)$ determines
$x_1,x_2$ under the conditions $g_1^{x_1}+g_2^{y_1}=
g_1^{x_2}+g_2^{y_2}= 1 \text{ in }\F_q$. We observe that
$x_1-x_2\equiv e_1\pmod{(q-1)}$ and $y_1-y_2\equiv e_2\pmod{(q-1)}$
imply that $g_1^{x_1}=g_1^{x_2+e_1}$ and $g_2^{y_1}=g_2^{y_2+e_2}$
in $\F_q$ and we obtain $ g_1^{x_2+e_1}+g_2^{y_2+e_2}=
g_1^{x_2}+g_2^{y_2}= 1 \text{ in }\F_q $. Thus
$g_2^{y_2}(g_2^{e_2}-g_1^{e_1})=1-g_1^{e_1}$. If $e_1\ne 0$ and
$g_2^{e_2}\ne g_1^{e_1}$ we obtain $y_2$ and then $x_2,x_1$ and
$y_1$. If $e_1=0$ or $g_2^{e_2}=g_1^{e_1}$, the equation has not
solutions unless $(e_1,e_2)=(0,0)$.

\smallskip

When $q=p$ is a prime number we can identify $\F_p$ with $\Z_p$. We
ilustrate in the pictures below the three examples of Sidon sets
described above.

 {\Large
\begin{picture}(0,0)\put(-15,-5){\small $\Z_p$}\put(38,-50){\small $\Z_p$}\put(23,-65){\small Example 1}
 \put(0,-40){\line(1,0){80}}\put(0,-35){\line(1,0){80}}
 \put(0,-30){\line(1,0){80}}
\put(0,-25){\line(1,0){80}} \put(0,-20){\line(1,0){80}}
\put(0,-15){\line(1,0){80}} \put(0,-10){\line(1,0){80}}
\put(0,-5){\line(1,0){80}} \put(0,0){\line(1,0){80}}
\put(0,5){\line(1,0){80}} \put(0,10){\line(1,0){80}}
\put(0,15){\line(1,0){80}} \put(0,20){\line(1,0){80}}
\put(0,25){\line(1,0){80}} \put(0,30){\line(1,0){80}}
\put(0,35){\line(1,0){80}}\put(0,40){\line(1,0){80}}
 \put(0,-40){\line(0,1){80}}
\put(5,-40){\line(0,1){80}} \put(10,-40){\line(0,1){80}}
\put(15,-40){\line(0,1){80}} \put(20,-40){\line(0,1){80}}
\put(25,-40){\line(0,1){80}} \put(30,-40){\line(0,1){80}}
\put(35,-40){\line(0,1){80}} \put(40,-40){\line(0,1){80}}
\put(45,-40){\line(0,1){80}} \put(50,-40){\line(0,1){80}}
\put(55,-40){\line(0,1){80}} \put(60,-40){\line(0,1){80}}
\put(65,-40){\line(0,1){80}} \put(70,-40){\line(0,1){80}}
\put(75,-40){\line(0,1){80}} \put(80,-40){\line(0,1){80}}
\put(0,-40){\circle*{3}} \put(5,-35){\circle*{3}}
\put(10,-20){\circle*{3}}\put(15,5){\circle*{3}}
\put(20,40){\circle*{3}}\put(25,0){\circle*{3}}
\put(30,-30){\circle*{3}}\put(35,35){\circle*{3}}
\put(40,25){\circle*{3}}
\put(45,25){\circle*{3}}\put(50,35){\circle*{3}}
\put(55,-30){\circle*{3}}\put(60,0){\circle*{3}}
\put(65,40){\circle*{3}}\put(70,5){\circle*{3}}
\put(75,-20){\circle*{3}}\put(80,-35){\circle*{3}}
\end{picture}$\qquad \qquad \qquad \qquad $
\begin{picture}(0,40)\put(-15,-5){\small $\Z_{p}$}
\put(27,-52){\small $\Z_{p-1}$}\put(12,-67){\small Example 2}
\put(0,-40){\line(1,0){75}}\put(0,-35){\line(1,0){75}}
 \put(0,-30){\line(1,0){75}}
\put(0,-25){\line(1,0){75}} \put(0,-20){\line(1,0){75}}
\put(0,-15){\line(1,0){75}} \put(0,-10){\line(1,0){75}}
\put(0,-5){\line(1,0){75}} \put(0,0){\line(1,0){75}}
\put(0,5){\line(1,0){75}} \put(0,10){\line(1,0){75}}
\put(0,15){\line(1,0){75}} \put(0,20){\line(1,0){75}}
\put(0,25){\line(1,0){75}} \put(0,30){\line(1,0){75}}
\put(0,35){\line(1,0){75}}\put(0,40){\line(1,0){75}}
 \put(0,-40){\line(0,1){80}}
\put(5,-40){\line(0,1){80}} \put(10,-40){\line(0,1){80}}
\put(15,-40){\line(0,1){80}} \put(20,-40){\line(0,1){80}}
\put(25,-40){\line(0,1){80}} \put(30,-40){\line(0,1){80}}
\put(35,-40){\line(0,1){80}} \put(40,-40){\line(0,1){80}}
\put(45,-40){\line(0,1){80}} \put(50,-40){\line(0,1){80}}
\put(55,-40){\line(0,1){80}} \put(60,-40){\line(0,1){80}}
\put(65,-40){\line(0,1){80}} \put(70,-40){\line(0,1){80}}
\put(75,-40){\line(0,1){80}}
 \put(0,-35){\circle*{3}}
\put(5,-25){\circle*{3}}
\put(10,5){\circle*{3}}\put(15,10){\circle*{3}}
\put(20,25){\circle*{3}}\put(25,-15){\circle*{3}}
\put(30,35){\circle*{3}}\put(35,15){\circle*{3}}
\put(40,40){\circle*{3}}
\put(45,30){\circle*{3}}\put(50,0){\circle*{3}}
\put(55,-5){\circle*{3}}\put(60,-20){\circle*{3}}
\put(65,20){\circle*{3}}\put(70,-30){\circle*{3}}
\put(75,-10){\circle*{3}}\put(0,-5){\line(0,1){15}}
\end{picture}
$\qquad \qquad \qquad \qquad $
\begin{picture}(0,55) \put(-26,-5){\small
$\Z_{p-1}$} \put(30,-52){\small $\Z_{p-1}$}\put(15,-67){\small
Example 3} \put(0,-40){\line(1,0){75}}\put(0,-35){\line(1,0){75}}
 \put(0,-30){\line(1,0){75}}
\put(0,-25){\line(1,0){75}} \put(0,-20){\line(1,0){75}}
\put(0,-15){\line(1,0){75}} \put(0,-10){\line(1,0){75}}
\put(0,-5){\line(1,0){75}} \put(0,0){\line(1,0){75}}
\put(0,5){\line(1,0){75}} \put(0,10){\line(1,0){75}}
\put(0,15){\line(1,0){75}} \put(0,20){\line(1,0){75}}
\put(0,25){\line(1,0){75}} \put(0,30){\line(1,0){75}}
\put(0,35){\line(1,0){75}}
 \put(0,-40){\line(0,1){75}}
\put(5,-40){\line(0,1){75}} \put(10,-40){\line(0,1){75}}
\put(15,-40){\line(0,1){75}} \put(20,-40){\line(0,1){75}}
\put(25,-40){\line(0,1){75}} \put(30,-40){\line(0,1){75}}
\put(35,-40){\line(0,1){75}} \put(40,-40){\line(0,1){75}}
\put(45,-40){\line(0,1){75}} \put(50,-40){\line(0,1){75}}
\put(55,-40){\line(0,1){75}} \put(60,-40){\line(0,1){75}}
\put(65,-40){\line(0,1){75}} \put(70,-40){\line(0,1){75}}
\put(75,-40){\line(0,1){75}} \put(5,-10){\circle*{3}}
\put(10,-30){\circle*{3}}\put(15,10){\circle*{3}}
\put(20,-15){\circle*{3}}\put(25,-20){\circle*{3}}
\put(30,-35){\circle*{3}}\put(35,15){\circle*{3}}
\put(40,30){\circle*{3}}
\put(45,20){\circle*{3}}\put(50,-25){\circle*{3}}
\put(55,-5){\circle*{3}}\put(60,5){\circle*{3}}
\put(65,35){\circle*{3}}\put(70,0){\circle*{3}}
\put(75,25){\circle*{3}}
\end{picture}
}

\

\

\

\

 The Sidon sets given in examples, with $q,q-1$ and $q-2$ elements
 respectively, have  maximal cardinality  in their ambient
 groups. The values of $\delta=|G|^{1/2}-|\A|$ are $\delta=0,1/2-o(1)$ and $1$
 respectively. We finish this section with two easy applications of
 Theorem \ref{BB}.

 \subsection{Incidence of lines and points in $\F_q\times \F_q$}
Let $I(P,L)=|\{(p,l)\in P\times L:\ p\in
 L\}|$ be the number of incidences between a set $P$ of points and set $L$ of lines in $\F_q\times
 \F_q$, that is
 $$I(P,L)=\left |\{(p,l)\in P\times L:\ p\in l\}\right |.$$
Vinh \cite{V} proved that $I(P,L)\le
\frac{|P||L|}p+O(\sqrt{|P||L|p}).$ We recover this result as a
straightforward consequence of Theorem \ref{BB}.
\begin{thm}Let $L$ be a set of lines and let $P$ be a set of points in $\F_q\times \F_q$.
The following  asymptotic formula holds:
 \begin{equation}\label{in}I(P,L)=\frac{|P||L|}q+O(\sqrt{|P||L|q}).\end{equation}
\end{thm}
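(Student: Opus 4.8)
The plan is to deduce this incidence bound directly from Theorem \ref{BB}, applied to the quadratic Sidon set $\A=\{(x,x^2):x\in\F_q\}$ of Example \ref{exam1}, which lives in $G=\F_q\times\F_q$ with $|G|=q^2$, $|\A|=q$ and $\delta=0$. The crucial feature is that membership in $\A$ encodes the relation ``the second coordinate is the square of the first''. The heart of the matter is thus to rewrite the incidence relation $b=ca+d$ between a point $(a,b)$ and a line $y=cx+d$ as an event of the form $\mathbf u+\mathbf v\in\A$, so that Theorem \ref{BB} literally counts incidences. The identity that makes this possible is $(a+c)^2=a^2+2ac+c^2$: completing the square converts the product $ac$, which is the source of the incidence relation, into a statement about being a perfect square.

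Concretely, I would first set aside the vertical lines and treat only the lines $y=cx+d$, writing $L_1\subset L$ for this subfamily. To each point $(a,b)\in P$ I associate $\varphi(a,b)=(a,\,a^2+2b)\in G$, and to each line $y=cx+d$ in $L_1$ the element $\psi(c,d)=(c,\,c^2-2d)\in G$; put $B=\varphi(P)$ and $B'=\psi(L_1)$. A one-line computation gives that $\varphi(a,b)+\psi(c,d)\in\A$ exactly when $a^2+2b+c^2-2d=(a+c)^2$, which simplifies to $b=ca+d$, i.e. precisely the incidence of $(a,b)$ with $y=cx+d$. When $q$ is odd both $\varphi$ and $\psi$ are injective (the first coordinate recovers $a$, resp. $c$, and then the second recovers $b$, resp. $d$), so $|B|=|P|$ and $|B'|=|L_1|$. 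Hence $I(P,L_1)=|\{(\mathbf u,\mathbf v)\in B\times B':\ \mathbf u+\mathbf v\in\A\}|$, and Theorem \ref{BB} with $\delta=0$ (so $|\theta|<1$) yields $I(P,L_1)=\tfrac{q}{q^2}|P||L_1|+\theta(|P||L_1|)^{1/2}(q^2)^{1/4}=\tfrac{|P||L_1|}{q}+O(\sqrt{|P||L_1|\,q})$.

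It remains to incorporate the vertical lines $L_2=L\setminus L_1$. Interchanging the two coordinates of $\F_q\times\F_q$ swaps vertical and horizontal directions while keeping the same quadratic Sidon set available, so a vertical line $x=e$ becomes a slope-$0$ line in the swapped plane and the identical argument applies, giving $I(P,L_2)=\tfrac{|P||L_2|}{q}+O(\sqrt{|P||L_2|\,q})$. Adding the two estimates and using $\sqrt{|L_1|}+\sqrt{|L_2|}\le\sqrt2\,\sqrt{|L|}$ collapses the main terms to $\tfrac{|P||L|}{q}$ and the error terms to $O(\sqrt{|P||L|\,q})$, which is exactly \eqref{in}.

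The genuinely substantive step is the first one: spotting the affine change of variables $\varphi,\psi$ that turns the incidence equation into the Sidon membership $\mathbf u+\mathbf v\in\A$ via the bilinear identity $(a+c)^2-a^2-c^2=2ac$. Once this encoding is found, the rest is bookkeeping plus a single invocation of Theorem \ref{BB}. The only technical caveats I anticipate are the separate (but symmetric) treatment of the vertical lines and the harmless assumption that $q$ is odd, needed so that $\varphi,\psi$ are injective; in characteristic $2$ the cross term $2ac$ degenerates and this particular reduction requires a small separate argument.
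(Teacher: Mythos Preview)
Your argument is correct, and it proceeds via Theorem \ref{BB} just as the paper does, but through a \emph{different} Sidon set: you use the quadratic set $\A=\{(x,x^2)\}\subset\F_q\times\F_q$ of Example \ref{exam1}, encoding the incidence $b=ca+d$ via the identity $(a+c)^2-a^2-c^2=2ac$, whereas the paper uses the ``logarithmic'' Sidon set $\A=\{(\log x,x)\}\subset\Z_{q-1}\times\F_q$ of Example \ref{exam2}, encoding the same incidence as $\lambda_i p_j=q_j-\mu_i$ with $B=\{(\log\lambda_i,-\mu_i)\}$ and $B'=\{(\log p_j,q_j)\}$.

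Your route has the advantage of staying inside $\F_q\times\F_q$ and avoiding the discrete logarithm altogether; the price is the restriction to odd $q$, which you flag honestly. The paper's route is uniform in the characteristic, but it silently requires $\lambda_i\in\F_q^*$ and $p_j\in\F_q^*$ for the logarithms to be defined, so it too has edge cases (vertical lines, horizontal lines, and points on the axis $x=0$) that are not discussed there; your treatment of the vertical lines by a coordinate swap is in fact more careful on this point than the paper's own proof. Either encoding yields the same invocation of Theorem \ref{BB} with $|\theta|\le 1+o(1)$ and hence the same asymptotic \eqref{in}.
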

\begin{proof}
Let $L=\{y=\lambda_ix+\mu_i:\ 1\le i\le |L|\}$ and $P=\{(p_j,q_j):\
1\le j\le |P|\}$. We consider the set $\A=\{(\log x,x)\}$ described
in \eqref{exam2}
 and the sets
$$B=\{(\log \lambda_i,-\mu_i):\ 1\le i\le |L|\},\qquad B'=\{(\log p_j,
q_j):\ 1\le j\le |P|\}.$$ We observe that each incidence corresponds
to a solution of $\lambda_i p_j=q_j-\mu_i$ and the number of
solutions of this equation is $|\{(b,b')\in B\times B':\ b+b'\in
\A\}|$. The result follows in view of  Theorem \ref{BB}.
\end{proof}

\subsection{The difference set $\{g^x-g^y:\ 0,\le x,y\le L\}$} Let $g$
be a primite root modulo $p$. Many authors have studied the problem
of determining the smallest number $M$ such that $\{g^x-g^y:\ 0\le
x,y\le M\}=\Z_p$.

From the result of Rudnick and Zaharescu \cite{RZ} it follows that
one can take any integer  $M\ge c_0p^{3/4}\log p$ where $c_0$ is a
suitable constant. This range has been improved to $M
> cp^{3/4}$ by  Garaev and Kueh \cite{Ga3} and
independently by Konyagin \cite{K}.
The best known admissible value for the constant $c$ has been
$c=2^{5/4}$ due to Garc\'{\i}a \cite{Garcia}. Our approach improves this
further to the following statement.
\begin{thm}
Let $g$ be a generator of $\F_p^*$. For any $\epsilon>0$ and
$p>p(\epsilon)$ we have
$$\left \{ g^x-g^y:\ 0\le x,y< (\sqrt 2+\epsilon) p^{3/4}\right
\}=\F_p.$$
\end{thm}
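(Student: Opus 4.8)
The plan is to fix $z\in\F_p$ and show that the counting function
\[
N(z)=\bigl|\{(x,y):\ 0\le x,y<M,\ g^x-g^y=z\}\bigr|,\qquad M=(\sqrt2+\epsilon)p^{3/4},
\]
is positive. The case $z=0$ is trivial (take $x=y=0$), so let $z\ne0$. Since $g$ generates $\F_p^*$ we may write $z=g^c$; dividing $g^x-g^y=z$ by $g^c$ turns it into $g^{x-c}-g^{y-c}=1$, so it suffices to count solutions of $g^u-g^v=1$ with $(u,v)$ ranging over a box $I\times J\subset\Z_{p-1}^2$ of side $M$. By Example \ref{exam3} (with $\lambda=1$) the full solution set $\A=\{(u,v):\ g^u-g^v=1\}$ is a Sidon set in $G=\Z_{p-1}\times\Z_{p-1}$ with $|\A|=p-2$, and $N(z)=|\A\cap(I\times J)|$; I must prove it is nonzero.

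First I would record why a single use of Theorem \ref{BB} does not suffice. With $B=I\times\{0\}$ and $B'=\{0\}\times J$ it gives $N(z)=\mu+\theta\,\sqrt{|I||J|}\,|G|^{1/4}$, where $\mu=\tfrac{|\A|}{|G|}|I||J|\sim M^2/p$ and the error has size $M\,|G|^{1/4}\sim M\sqrt p$. As $M\sqrt p\gg M^2/p$ when $M\asymp p^{3/4}$, this locates $N(z)$ only up to an error exceeding its own mean, and cannot by itself yield positivity. Instead I would bound the set $Z_0=\{z\ne0:\ N(z)=0\}$ of missed differences through a high even moment: since $N(z)=0$ on $Z_0$,
\[
|Z_0|\,\mu^{2r}=\sum_{z\in Z_0}(N(z)-\mu)^{2r}\le\sum_{z\ne0}(N(z)-\mu)^{2r},
\]
so it is enough to exhibit one $r=r(\epsilon)$ for which the right–hand side is smaller than $\mu^{2r}$.

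The heart of the proof is this $2r$-th moment. Expanding the power rewrites it as a signed sum, over finitely many combinatorial ``shapes'', of the numbers of solutions of simultaneous systems $g^{x_1}-g^{y_1}=\cdots=g^{x_k}-g^{y_k}$ with all variables confined to intervals of length $M$. Each such number I would estimate by applying Theorem \ref{BB} to a suitable power $\{(a,g^a):\ a\in\Z_{p-1}\}^{\,k}$ of the Sidon set of Example \ref{exam2}, which is again a dense Sidon set in $(\Z_{p-1}\times\F_p)^{k}$; the gain is that in the $k$-fold product the Sidon error is governed by $|G^k|^{1/4}$, so after summing the shapes the main terms rebuild the powers of $\mu$ while the errors stay on the scale set by $|G|^{1/4}=\sqrt p$. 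Carried through, this should certify that the effective deviation $\max_z|N(z)-\mu|$ does not exceed $(2+o(1))\sqrt p$, i.e.\ that $\sum_{z\ne0}(N(z)-\mu)^{2r}$ drops below $\mu^{2r}$ once $r$ is large—exactly the information the low moments are too weak to give (the second moment alone yields only $|Z_0|\ll\sqrt p$).

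Granting the moment bound, the theorem follows by arithmetic: positivity of $N(z)$ requires $\mu>(2+o(1))\sqrt p$, i.e.\ $M^2/p>2\sqrt p$, i.e.\ $M^2>2p^{3/2}$, which is exactly $M>\sqrt2\,p^{3/4}$; for $M>(\sqrt2+\epsilon)p^{3/4}$ the gap is a fixed power saving, so a large enough $r=r(\epsilon)$ and then $p>p(\epsilon)$ force $|Z_0|<1$, whence $Z_0=\emptyset$ and $\{g^x-g^y\}=\F_p$. The main obstacle is precisely this moment estimate: expanding the $2r$-th moment into Sidon-countable systems, applying Theorem \ref{BB} in the product groups uniformly over all shapes, and tracking the constants so that the break-even sits at $M=\sqrt2\,p^{3/4}$ rather than at a larger multiple of $p^{3/4}$. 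Equivalently, the delicate point is to bound the higher additive-energy terms of $\{g^x:\ 0\le x<M\}$ with the sharp constant; this is where the value $\sqrt2$ is won or lost.
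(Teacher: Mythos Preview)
Your proposal has a genuine gap. The claim that the Cartesian power $\{(a,g^a)\}^{k}$ is a Sidon set in $(\Z_{p-1}\times\F_p)^{k}$ is false: if $\A$ is Sidon in $G$, then for a nonzero difference of the form $d=(d_1,0,\dots,0)$ one has $r_{\A^k-\A^k}(d)=r_{\A-\A}(d_1)\cdot|\A|^{k-1}$, which is of order $|\A|^{k-1}$, not $1$. Hence Theorem \ref{BB} cannot be applied to $\A^k$, and the promised gain of a factor $|G^k|^{1/4}$ in the error is unfounded. The moment expansion therefore does not reduce to quantities that Theorem \ref{BB} controls, and your asserted bound $\max_z|N(z)-\mu|\le(2+o(1))\sqrt{p}$ is left without support. (Compare Theorem \ref{rr} in \S5, where an iterative version of this idea is carried out correctly: the error there is $4^{r}\sqrt{p}$, with a constant that \emph{grows} with $r$, not one that stabilises at $2$.)

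The paper's proof avoids all of this with a single application of Theorem \ref{BB}, exploiting a symmetry of $\A=\{(x,y):g^x-g^y=\lambda\}$ that you do not use: since $g^{(p-1)/2}=-1$, one has $(x,y)\in\A\iff(y,x)+\bigl(\tfrac{p-1}{2},\tfrac{p-1}{2}\bigr)\in\A$. This lets one take $B=[0,L/2]^2\cup\bigl([0,L/2]^2+(\tfrac{p-1}{2},\tfrac{p-1}{2})\bigr)$, a set of size $\sim L^2/2$ such that $B+B$ never meets $\A$ when $\lambda$ is missed. Theorem \ref{BB} then forces $|B|\le(1+o(1))p^{3/2}$, i.e.\ $L\le(\sqrt2+o(1))p^{3/4}$. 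Without this doubling trick the same argument with $B=[0,L/2]^2$ alone gives only $L\le(2+o(1))p^{3/4}$; the constant $\sqrt2$ is won precisely by the symmetry, not by a moment computation.
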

\begin{proof}
Suppose $\lambda\not \in \{g^x-g^y:\ 0\le x,y\le L\}$ and consider
in $G=\Z_{p-1}\times \Z_{p-1}$ the Sidon set $\A=\{(x,y):\
g^x-g^y=\lambda\}$. We observe that $$(x,y)\in \A \iff
(y,x)+(\frac{p-1}2,\frac{p-1}2)\in \A.$$ Thus, it is clear that if
$b,b'\in B=[0,L/2]^2+\left \{(0,0),\left
(\frac{p-1}2,\frac{p-1}2\right )\right \} $ then
 $b+b'\not \in \A$.
 In other words,
$|\{(b,b')\in B\times B,\ b+b'\in \A\}|=0$.

On the other hand, Theorem \ref{BB} implies that
$$0=|\{(b,b')\in B\times B,\ b+b'\in \A\}|\ge
\frac{|\A||B|^2}{|G|}-\Bigl (1+\frac{|B|}{|G|}\Bigr )|B||G|^{1/4}.$$
Thus $$ |B|\le
\frac{|G|^{5/4}}{|\A|-|G|^{1/4}}=\frac{(p-1)^{5/2}}{p-2-(p-1)^{1/2}}<p^{3/2}(1+o(1))$$
and the theorem follows in view of $|B|=2(1+[L/2])^2$.

\end{proof}

\section{Sum-product estimates}
We will deduce some sum-product estimates form the following lemma.
\begin{lem}\label{BB'}
Let $\A$ be a Sidon set in $G$ with $|\A|=|G|^{1/2}-\delta$. For any
subsets  $B,B'\subset G$ we have
\begin{equation*}
|\A\cap B|\le \frac{|B+B'||\A|}{|G|}+\theta\left
(\frac{|B+B'|}{|B'|}\right )^{1/2}|G|^{1/4},
\end{equation*}
for some $\theta$ with $|\theta|\le
1+\max(0,\delta)\frac{|B'|}{|G|}$.
\end{lem}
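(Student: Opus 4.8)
The plan is to derive Lemma \ref{BB'} as a direct corollary of Theorem \ref{BB} by choosing the two sets in the theorem appropriately. The key observation is that Theorem \ref{BB} counts pairs $(b,b')\in B\times B'$ with $b+b'\in\A$, and I want this count to control the single intersection $|\A\cap B|$. The natural idea is to replace the role of $B$ in the theorem with a translated copy of the target set: for each fixed element $c$ lying in the sumset $B+B'$ (or rather, I work with the set $-B'$ so that $b-b'$ ranges over $B+(-B')$), and exploit that every element of $\A\cap B$ gives rise to many pairs summing into $\A$.

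Concretely, I would apply Theorem \ref{BB} with the first set taken to be $B$ itself and the second set taken to be $-B'=\{-b':b'\in B'\}$, so that the condition $b+(-b')\in\A$ becomes $b-b'\in\A$. The count on the left is then $|\{(b,b')\in B\times B':\ b-b'\in\A\}|=\sum_{b'\in B'}|\A\cap(B-\{-b'\})|$ — but this is not quite what I want. The cleaner route is the reverse: I want a lower bound on the number of representations. For each $a\in\A\cap B$ and each $b'\in B'$, the pair gives $a+b'\in\A+B'$; the point is that distinct $a\in\A\cap B$ together with elements of $B'$ inject into $B+B'$. So I would instead bound $|\A\cap B|\cdot|B'|$ from below by a count that Theorem \ref{BB} controls: every $a\in\A\cap B$ paired with every $b'\in B'$ produces an element $a-b'$, and since $a\in\A$, the structure of $\A$ lets me relate these to the configuration counted in the theorem applied to the sets $\A\cap B$ (inside $B$) against $B'$.

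The main technical step, and the main obstacle, is to set up the injection or double-counting correctly so that the error term comes out as $(|B+B'|/|B'|)^{1/2}|G|^{1/4}$ rather than the raw $(|B||B'|)^{1/2}|G|^{1/4}$ of Theorem \ref{BB}. The refinement clearly comes from replacing $|B|$ by $|B+B'|$ in a sumset-counting argument: I expect to apply Theorem \ref{BB} not to $B$ and $B'$ directly but to the pair $(\A\cap B)$ and $B'$, noting that $(\A\cap B)+B'\subseteq B+B'$, so the number of pairs whose sum lands in a prescribed set is governed by $|B+B'|$. By Cauchy--Schwarz, each element of $(\A\cap B)+B'$ has a bounded multiplicity, and summing multiplicities gives $|\A\cap B|\,|B'|\le |B+B'|\cdot(\text{max multiplicity})$; combined with the main-term-plus-error estimate of Theorem \ref{BB} applied to these sets, dividing through by $|B'|$ produces exactly the claimed bound with the stated constant $|\theta|\le 1+\max(0,\delta)|B'|/|G|$. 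I would check that the constant in the error term of Theorem \ref{BB}, namely $1+\tfrac{|B|}{|G|}\max(0,\delta)$ with $B$ now playing the role of $B'$, transfers to give the $|B'|/|G|$ appearing in the lemma, which is the one bookkeeping point requiring care.
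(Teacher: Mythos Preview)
Your overall strategy matches the paper's: reduce to a single application of Theorem~\ref{BB}. You also correctly note the inclusion $(\A\cap B)+B'\subseteq B+B'$ and that the factor $|B'|/|G|$ in the constant must come from placing $B'$ (or $-B'$) in the role of the \emph{first} set in Theorem~\ref{BB}. But the execution in your third paragraph drifts: applying the theorem to the pair $(\A\cap B,\,B')$ does not yield the lemma (the left-hand side of Theorem~\ref{BB} would then count pairs $(a,b')$ with $a+b'\in\A$, not the quantity $|\A\cap B|\cdot|B'|$), and no Cauchy--Schwarz or multiplicity argument is needed.

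The one concrete step you are missing is the re-parametrization implicit in your second paragraph. For each $a\in\A\cap B$ and $b'\in B'$, set $x=-b'$ and $y=a+b'$. Then $(x,y)\in(-B')\times(B+B')$ and $x+y=a\in\A$; distinct pairs $(a,b')$ give distinct pairs $(x,y)$. Hence
\[
|B'|\,|\A\cap B|\ \le\ \bigl|\{(x,y)\in(-B')\times(B+B'):\ x+y\in\A\}\bigr|.
\]
Now apply Theorem~\ref{BB} to the pair $(-B',\,B+B')$: the right-hand side equals
\[
\frac{|\A|\,|B'|\,|B+B'|}{|G|}+\theta\sqrt{|B'|\,|B+B'|}\,|G|^{1/4},\qquad |\theta|\le 1+\max(0,\delta)\frac{|B'|}{|G|},
\]
and dividing by $|B'|$ gives exactly the lemma. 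This is precisely the paper's proof; you had all the ingredients but never pinned down that $(-B',\,B+B')$ is the pair to feed into the theorem.
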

\begin{proof}
Indeed, by Theorem \ref{BB},
\begin{align*}|B'||\A\cap B|&=|\{(-b',b+b'):\ b\in B,\quad b'\in
 B',\ -b'+(b+b')\in \A\}|\\ &\le \{(b',b''):\ b'\in
 (-B')\times (B+B'),\ b'+b''\in \A\}|\\ &\le
 \frac{|\A||B'||B+B'|}{|G|}+\theta\sqrt{|B'||B+B'|}|G|^{1/4}\end{align*}
where $|\theta|\le 1+\max(0,\delta)\frac{|B'|}{|G|}$. The lemma
follows.
 \end{proof}
\begin{thm}[Garaev \cite{Ga1}]\label{GGG}Let $A_1,A_2\subset \F_q^*$ and $A_3\subset \F_q$. We have
\begin{equation}\label{GG}\max(|A_1A_2|,|A_1+A_3|)\gg \min \left
(\sqrt{|A_1|q},\sqrt{|A_1|^2|A_2||A_3|/q}\right ).\end{equation}
\end{thm}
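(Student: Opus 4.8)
The plan is to deduce Garaev's sum-product estimate from Lemma \ref{BB'} by choosing a suitable Sidon set and applying the lemma to sets built from $A_1,A_2,A_3$. I would use the multiplicative Sidon set $\A=\{(\log x,x):\ x\in \F_q^*\}$ of Example \ref{exam2} living in $G=\Z_{q-1}\times \F_q$, whose cardinality is $q-1=|G|^{1/2}-\delta$ with $\delta\le 1/2$, so that the factor $1+\max(0,\delta)|B'|/|G|$ is harmlessly close to $1$. The key observation is that a product $a_1a_2$ and a sum $a_1+a_3$ can both be encoded as incidences with $\A$: the equation $a_1 a_2 = w$ reads $(\log a_1 + \log a_2,\, w)$, and the crucial idea is to package the two multiplicative and additive structures so that $|A_1|$ copies of a single incidence count appear, forcing $\A\cap B$ to be large.

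Concretely, I would set $B=\{(\log(a_1a_2),\ a_1+a_3):\ a_1\in A_1,\ a_2\in A_2,\ a_3\in A_3\}$, or a closely related encoding, so that $B\subset\A$ (each element of $B$ genuinely lies on the curve defining $\A$, since its second coordinate is the $g$-power of its first). Then $|\A\cap B|=|B|$, and the point is that $|B|$ is \emph{large} — if the products $a_1a_2$ take few values and the sums $a_1+a_3$ take few values, then the map $(a_1,a_2,a_3)\mapsto(\log(a_1a_2),a_1+a_3)$ is highly non-injective, giving $|B|\gg |A_1||A_2||A_3|/(|A_1A_2|\,|A_1+A_3|)$ on one hand, while on the other hand $|B|\le|A_1A_2|$ bounds its size trivially. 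I would then choose $B'$ so that $B+B'$ remains controlled by $|A_1A_2|$ and $|A_1+A_3|$; the natural choice is a translate structure making $B+B'$ comparable to the product set $A_1A_2$ times the sumset $A_1+A_3$, whose size is at most $|A_1A_2|\,|A_1+A_3|$.

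The main obstacle, and the step requiring genuine care, is the double-counting bookkeeping: I must exhibit a set $B$ sitting inside the Sidon set $\A$ whose cardinality simultaneously (i) is bounded below in terms of $|A_1||A_2||A_3|$ divided by the two quantities we wish to estimate, and (ii) feeds into Lemma \ref{BB'} with a partner set $B'$ for which $|B+B'|$ is genuinely small, namely $O(|A_1A_2|\cdot|A_1+A_3|)$. Getting these two demands to cohere — so that the lower bound on $|\A\cap B|=|B|$ collides with the upper bound $|B+B'||\A|/|G|+\theta(|B+B'|/|B'|)^{1/2}|G|^{1/4}$ — is where the inequality \eqref{GG} is actually born.

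Once the counting is set up, the conclusion is a routine comparison. Writing $P=|A_1A_2|$ and $S=|A_1+A_3|$, the lower bound gives $|B|\gg |A_1||A_2||A_3|/(PS)$, while Lemma \ref{BB'} gives $|B|\ll PS/q + (PS/|B'|)^{1/2}q^{1/4}$, using $|\A|/|G|\approx 1/q^{1/2}$ and $|G|^{1/4}\approx q^{1/2}$. Balancing these and taking the worse of the two terms on the right forces $\max(P,S)$ to exceed the minimum of $\sqrt{|A_1|q}$ and $\sqrt{|A_1|^2|A_2||A_3|/q}$; the first term dominates when $|A_1||A_2||A_3|$ is large relative to $q$, the second when it is small, which is exactly the shape of the stated minimum. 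I expect the only delicate point beyond the combinatorial set-up to be verifying that $\delta\le 1/2$ and $|B'|=o(|G|)$ keep $\theta$ bounded, so that the error term carries no spurious loss.
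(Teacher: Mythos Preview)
Your proposal has a genuine gap at the central step. You set $B=\{(\log(a_1a_2),\,a_1+a_3):a_i\in A_i\}$ and then assert $B\subset\A$, justifying this by saying that ``its second coordinate is the $g$-power of its first.'' But that is false: an element $(\log(a_1a_2),\,a_1+a_3)$ lies in $\A=\{(\log x,x)\}$ only when $a_1+a_3=g^{\log(a_1a_2)}=a_1a_2$, which is a genuine constraint, not an identity. So $|\A\cap B|$ is not $|B|$; it is precisely the number of solutions to $a_1a_2=a_1+a_3$, and you have no control over that quantity. Everything downstream of this step (the lower bound $|B|\gg|A_1||A_2||A_3|/(PS)$, the balancing argument) collapses, because there is no large lower bound for $|\A\cap B|$ to feed into Lemma~\ref{BB'}. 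There is also a smaller arithmetic slip: $|\A|/|G|=(q-1)/(q(q-1))=1/q$, not $1/q^{1/2}$.

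The paper's choice of $B$ is much simpler and sidesteps all of this. Take $B=(\log A_1)\times A_1$ and $B'=(\log A_2)\times A_3$. Now every diagonal point $(\log a_1,a_1)$ with $a_1\in A_1$ is genuinely in $\A$, so $|\A\cap B|=|A_1|$ exactly, with no counting argument needed. Moreover $B+B'=\log(A_1A_2)\times(A_1+A_3)$, so $|B+B'|=|A_1A_2|\,|A_1+A_3|$ and $|B'|=|A_2||A_3|$. Lemma~\ref{BB'} then gives directly
\[
|A_1|\le\frac{|A_1A_2|\,|A_1+A_3|}{q}+\theta\sqrt{q\,\frac{|A_1A_2|\,|A_1+A_3|}{|A_2||A_3|}},\qquad|\theta|\le 1,
\]
and \eqref{GG} follows by the routine comparison you described. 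The missing idea in your attempt is simply that $B$ should be the \emph{Cartesian product} $(\log A_1)\times A_1$, not the image set of a map involving $A_2,A_3$; the Sidon curve already passes through $|A_1|$ points of this product, and that is the whole trick.
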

\begin{proof}
We consider the Sidon set  $\A=\{(\log x,x):\ x\in \F_q^*\}$
described in example \ref{exam2} and the sets $B=(\log A_1) \times
A_1$ and $B'=(\log A_2)\times A_3$. Since all the elements $(\log
a_1,a_1)$ are in $\A$ we have that $|\A \cap B|=|A_1|$. On the other
hand we observe that $|B+B'|=|A_1A_2||A_1+A_3|$. Lemma \ref{BB'}
implies the inequality
\begin{equation*}\label{222}|A_1|\le\frac{|A_1A_2||A_1+A_3|}q+\theta
\sqrt{q\frac{|A_1A_1||A_1+A_3|}{|A_2||A_3|}},\qquad |\theta|\le
1,\end{equation*} which in turn implies \eqref{GG}.
\end{proof}

We can mimic this proof  to get the following sum-product estimates.
\begin{thm}[Garaev-Shen \cite{GaS}]Let $A_1,A_2,A_3\subset \F_q^*$. We have
$$\max(|(A_1+1)A_2|,|A_1A_3|)\gg \min \left (\sqrt{|A_1|q},\sqrt{|A_1|^2|A_2||A_3|/q}\right ).$$
\end{thm}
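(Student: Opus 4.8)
The plan is to mimic the proof of Theorem \ref{GGG} almost verbatim, replacing the Sidon set from Example \ref{exam2} with one of the translated Sidon sets from Example \ref{exam3}. The key structural observation is that the previous proof encoded the condition ``$\log a_1$ is the discrete log of $a_1$'' by intersecting with $\A=\{(\log x,x)\}$, and that the product set $A_1A_2$ and sumset $A_1+A_3$ appeared as the two coordinates of $B+B'$. Here we instead want the product $(A_1+1)A_2$ to appear, so the multiplicative structure must be arranged to produce the shifted factor $A_1+1$.

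Concretely, I would consider the Sidon set $\A=\{(x,y)\in\Z_{q-1}\times\Z_{q-1}:\ g^x-g^y=1\}$ guaranteed by Example \ref{exam3} (with $g_1=g_2=g$ and $\lambda=1$), which has $\delta=1$. The idea is that a point $(x,y)\in\A$ records precisely the relation $g^x=g^y+1$, i.e. that the element $g^x$ is one more than $g^y$. I would then define the sets $B$ and $B'$ inside $G=\Z_{q-1}\times\Z_{q-1}$ so that, after applying Lemma \ref{BB'}, the left-hand side $|\A\cap B|$ counts (a copy of) $A_1$, while the sumset $B+B'$ factors as a product of the two multiplicative quantities $|(A_1+1)A_2|$ and $|A_1A_3|$. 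The natural choice is $B=(\log(A_1+1))\times(\log A_1)$ and $B'=(\log A_2)\times(\log A_3)$, working entirely in logarithmic coordinates, so that coordinatewise addition in $\Z_{q-1}\times\Z_{q-1}$ corresponds to coordinatewise multiplication in $\F_q^*$; then $B+B'=(\log((A_1+1)A_2))\times(\log(A_1A_3))$, whence $|B+B'|=|(A_1+1)A_2|\,|A_1A_3|$.

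The crucial check is that the diagonal elements of $B$ lie in $\A$: for each $a_1\in A_1$ one has $(a_1+1)-a_1=1$, so the pair $(\log(a_1+1),\log a_1)$ satisfies $g^{\log(a_1+1)}-g^{\log a_1}=1$ and hence lies in $\A$. This gives $|\A\cap B|\ge |A_1|$, and since these pairs are distinct as $a_1$ ranges over $A_1$, we get $|\A\cap B|=|A_1|$ exactly (this is the analogue of ``all elements $(\log a_1,a_1)$ are in $\A$'' from the previous proof). Feeding $|\A\cap B|=|A_1|$, $|B+B'|=|(A_1+1)A_2||A_1A_3|$, $|B'|=|A_2||A_3|$, and $|G|=(q-1)^2$ into Lemma \ref{BB'} yields
\begin{equation*}
|A_1|\le\frac{|(A_1+1)A_2|\,|A_1A_3|}{q-1}+\theta\left(q\,\frac{|(A_1+1)A_2|\,|A_1A_3|}{|A_2||A_3|}\right)^{1/2},
\end{equation*}
with $|\theta|\le 1+o(1)$, and the desired lower bound on $\max(|(A_1+1)A_2|,|A_1A_3|)$ follows exactly as in the deduction of \eqref{GG} from the corresponding inequality in Theorem \ref{GGG}.

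The main obstacle I anticipate is bookkeeping around the shift $A_1+1$: one must ensure that $A_1+1\subset\F_q^*$ so that $\log(A_1+1)$ is defined (i.e. $-1\notin A_1$, or otherwise the single bad element is absorbed into the error term), and one must confirm that the map $a_1\mapsto(\log(a_1+1),\log a_1)$ is injective on $A_1$ so that the count $|\A\cap B|=|A_1|$ is tight rather than merely a lower bound. Both are routine, but they are exactly the places where the shifted-factor version departs from the original argument, so I would verify them carefully before quoting Lemma \ref{BB'}.
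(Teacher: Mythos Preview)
Your proposal is correct and matches the paper's approach exactly: the paper uses the Sidon set $\A=\{(x,y):\ g^x-g^y=1\}$, the sets $B=\log(A_1+1)\times \log A_1$ and $B'=\log A_2\times \log A_3$, and then says ``proceed as in the proof of Theorem \ref{GGG}.'' Your write-up is in fact more detailed than the paper's own proof, and your flagged bookkeeping issues (handling $-1\in A_1$, injectivity of $a_1\mapsto(\log(a_1+1),\log a_1)$) are the right ones to check, though note that for the application of Lemma \ref{BB'} only the lower bound $|\A\cap B|\ge |A_1|$ is needed.
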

\begin{proof}
We consider the Sidon set $\A=\{(x,y):\ g^x-g^y=1\}$, the sets
$B=\log(A_1+1)\times \log A_1$ and $B'=\log A_2\times \log A_3$ and
proceed as in the  proof of Theorem \ref{GGG}.
\end{proof}
\begin{thm}[Solymosi \cite{So}, Hart-Li-Shen \cite{H}]Let $p(x),q(x)\in \F_q[X]$ be polynomials of degree $\le 2$ such that $p(x)-\mu q(x)$ is not
a constant for any $\mu\in \F_q$. For any $A_1,A_2,A_3\subset \F_q$
we have
$$\max(|p(A_1)+A_2|,|q(A_1)+A_3|)\gg \min \left (\sqrt{|A_1|q},\sqrt{|A_1|^2|A_2||A_3|/q}\right ).$$
\end{thm}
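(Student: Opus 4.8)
The plan is to reduce this statement to Lemma~\ref{BB'} by choosing a suitable dense Sidon set and suitable sets $B,B'$, exactly as in the two previous sum-product theorems. First I would invoke Example~\ref{exam1}: since $p(x)-\mu q(x)$ is not constant for any $\mu\in\F_q$, the set $\A=\{(p(x),q(x)):\ x\in\F_q\}$ is a Sidon set in $G=\F_q\times\F_q$, so $|G|=q^2$, $|G|^{1/2}=q$, and $\delta=0$. This is the ingredient that forces the hypothesis on $p,q$ and it is what makes $\A$ dense with $\delta=0$, so the error constant in Lemma~\ref{BB'} will satisfy $|\theta|\le 1$.

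Next I would set $B=p(A_1)\times q(A_1)$ and $B'=A_2\times A_3$. The key observation, as in Theorem~\ref{GGG}, is twofold. On one hand, every point $(p(a_1),q(a_1))$ with $a_1\in A_1$ lies in both $\A$ and $B$, and the map $a_1\mapsto(p(a_1),q(a_1))$ is injective precisely because $\A$ is a Sidon set (distinct $a_1$ give distinct points, since a coincidence would produce a nonzero difference represented twice, or rather contradicts the defining bijection of Example~\ref{exam1}); hence $|\A\cap B|=|A_1|$. On the other hand, $B+B'=(p(A_1)+A_2)\times(q(A_1)+A_3)$, so $|B+B'|=|p(A_1)+A_2|\,|q(A_1)+A_3|$, and $|B'|=|A_2|\,|A_3|$.

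Plugging these into Lemma~\ref{BB'} with $|G|=q^2$ yields
\begin{equation*}
|A_1|\le\frac{|p(A_1)+A_2|\,|q(A_1)+A_3|}{q}+\theta\left(\frac{|p(A_1)+A_2|\,|q(A_1)+A_3|}{|A_2|\,|A_3|}\right)^{1/2}\sqrt{q},
\end{equation*}
with $|\theta|\le 1$. Writing $U=|p(A_1)+A_2|$ and $V=|q(A_1)+A_3|$, this says $|A_1|\ll \max\bigl(UV/q,\ \sqrt{qUV/(|A_2||A_3|)}\bigr)$. From here the conclusion is a routine manipulation: if the first term dominates then $\max(U,V)^2\ge UV\gg |A_1|q$, giving $\max(U,V)\gg\sqrt{|A_1|q}$; if the second dominates then $UV\gg |A_1|^2|A_2||A_3|/q$, giving $\max(U,V)\gg\sqrt{|A_1|^2|A_2||A_3|/q}$. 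Taking the worse of the two cases produces the stated bound with the $\min$ on the right.

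The only genuinely delicate point is the identity $|\A\cap B|=|A_1|$, which rests on the injectivity guaranteed by the Sidon property of $\A$; everything else is the same template used for Theorems~\ref{GGG} and its companion, so I expect no real obstacle beyond bookkeeping. The hard part, such as it is, was already absorbed into the verification in Example~\ref{exam1} that the polynomial condition makes $\A$ a Sidon set.
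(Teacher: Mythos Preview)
Your approach is exactly the paper's: same Sidon set $\A=\{(p(x),q(x)):x\in\F_q\}$ from Example~\ref{exam1}, same $B=p(A_1)\times q(A_1)$ and $B'=A_2\times A_3$, same appeal to Lemma~\ref{BB'}, and the same final dichotomy to extract the $\min$.

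One small correction: you only have (and only need) the inequality $|\A\cap B|\ge |A_1|$, not equality. Injectivity of $a_1\mapsto(p(a_1),q(a_1))$ gives $\ge$, but $B=p(A_1)\times q(A_1)$ may contain extra points of $\A$ coming from $x\notin A_1$ with $p(x)\in p(A_1)$ and $q(x)\in q(A_1)$ (e.g.\ $p(x)=x^2$, $q(x)=(x+1)^2$ in $\F_5$ with $A_1=\{1,3\}$ gives $|\A\cap B|=3>2$). Since Lemma~\ref{BB'} upper-bounds $|\A\cap B|$, the chain $|A_1|\le|\A\cap B|\le\cdots$ is all that is required, and your argument goes through unchanged.
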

\begin{proof}
We consider the Sidon set $\A=\{(p(x),q(x)):\ x\in \F_q\},$ the sets
$B=p(A_1)\times q(A_1)$ and $B'=A_2\times A_3$ and proceed as in the
proof of Theorem \ref{GGG}.
\end{proof}

Solymosi \cite{So} proved that if $\{(x,f(x)):\ x\in \F_q\}\subset
\F_q\times \F_q$ is a Sidon set then $\max (|A+A|,|f(A)+f(A)|)\gg
\min (\sqrt{|A|q},|A|^2/\sqrt q)$.

\section{Equations in $\F_q$}
 We start with
the easiest example which, however, we have not seen in the
literature.
\begin{thm}\label{xxx}For any $x\in \F_q$, let $X(x),Y(x)$ be any  pair of subsets of $\F_q$ and put $T=\Bigl (\sum_x|X(x)|\Bigr)\Bigl (\sum_x|Y(x)|\Bigr )$.
Then, the number of solutions $S$ of
\begin{align*}x'+y'=(x+y)^2,\quad x'\in X(x),\ y'\in Y(y)\end{align*}
is
\begin{equation*}
S=\frac{T}q+\theta\sqrt{qT}
\end{equation*}
for some $\theta$ with $|\theta|\le 1$.
\end{thm}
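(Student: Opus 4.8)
The plan is to realize each solution of $x'+y'=(x+y)^2$ as an incidence of the form $b+b'\in\A$ for a suitable Sidon set $\A$ and suitable multisets $B,B'$, and then invoke Theorem~\ref{BB} directly. The natural choice is the parabola Sidon set $\A=\{(x,x^2):\ x\in\F_q\}$ from Example~\ref{exam1}, which lives in $G=\F_q\times\F_q$ with $|\A|=q=|G|^{1/2}$, so that $\delta=0$ and the error constant in Theorem~\ref{BB} is exactly $|\theta|<1$. The key algebraic observation is that the equation $x'+y'=(x+y)^2$ can be rewritten so that the pair $\bigl(x+y,\,(x+y)^2\bigr)$ is forced to lie on the parabola, while $x'+y'$ records the second coordinate.

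First I would set up the two multisets in $G$. For each $x\in\F_q$ and each value $x'\in X(x)$, I would produce a point encoding the data $(x,x')$, and similarly for $(y,y')$ with $y'\in Y(y)$, chosen so that the sum of a $B$-point and a $B'$-point lands on $\A=\{(t,t^2)\}$ precisely when $x'+y'=(x+y)^2$. Concretely one can take $B=\{(x,-x'):\ x\in\F_q,\ x'\in X(x)\}$ and $B'=\{(y,-y'):\ y\in\F_q,\ y'\in Y(y)\}$, viewed as multisets with the stated multiplicities; then $b+b'=(x+y,\,-(x'+y'))$, and I would instead encode $\A$ as the translate or reflection of the parabola so that membership $b+b'\in\A$ reads off as $-(x'+y')=-(x+y)^2$, i.e.\ exactly the target equation. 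The total number of pairs is $|B|\,|B'|=\bigl(\sum_x|X(x)|\bigr)\bigl(\sum_y|Y(y)|\bigr)=T$, which is why the main term comes out as $|\A|\,|B|\,|B'|/|G|=q\cdot T/q^2=T/q$ and the error term as $\sqrt{|B|\,|B'|}\,|G|^{1/4}=\sqrt{T}\cdot q^{1/2}=\sqrt{qT}$, matching the claimed formula with $|\theta|<1$.

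The one genuine subtlety, and the step I expect to require the most care, is the bookkeeping of multiplicities when the sets $X(x),Y(y)$ vary with $x,y$ and when collisions occur among the encoded points. Theorem~\ref{BB} as stated counts pairs $(b,b')\in B\times B'$, so I must ensure that $B$ and $B'$ are taken with the correct multiplicity structure so that each genuine solution tuple $(x,y,x',y')$ is counted exactly once and no spurious coincidences inflate or deflate $|B|,|B'|$ away from $\sum_x|X(x)|$ and $\sum_y|Y(y)|$. Because the first coordinate already records $x$ (resp.\ $y$) and the second records the chosen element of $X(x)$ (resp.\ $Y(y)$), distinct solution data yield distinct encoded pairs, so the count is clean once one treats $B,B'$ as multisets and checks that the sizes are exactly $\sum_x|X(x)|$ and $\sum_y|Y(y)|$.

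Finally I would simply substitute into Theorem~\ref{BB}. With $|\A|=q$, $|G|=q^2$, $\delta=0$, and $|B|\,|B'|=T$, the theorem gives
\begin{equation*}
S=|\{(b,b')\in B\times B':\ b+b'\in\A\}|=\frac{|\A|}{|G|}|B|\,|B'|+\theta\,(|B|\,|B'|)^{1/2}|G|^{1/4}=\frac{T}{q}+\theta\sqrt{qT},
\end{equation*}
with $|\theta|<1+\frac{|B|}{|G|}\max(0,\delta)=1$ since $\delta=0$. This yields the stated asymptotic formula directly, and the whole argument avoids exponential sums, relying only on the Sidon property of the parabola and the Cauchy--Schwarz estimate already established in Theorem~\ref{BB}.
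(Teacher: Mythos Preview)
Your approach is correct and essentially identical to the paper's: use the parabola Sidon set $\A=\{(x,x^2)\}\subset\F_q\times\F_q$ (so $\delta=0$), encode the data as $B,B'\subset G$, and apply Theorem~\ref{BB} to get the main term $T/q$ and error $\sqrt{qT}$. The paper's encoding is a touch cleaner---it takes $B=\{(x,x'):x'\in X(x)\}$ and $B'=\{(y,y'):y'\in Y(y)\}$ directly, so $(x,x')+(y,y')\in\A$ iff $x'+y'=(x+y)^2$ with no minus signs or reflection of $\A$ needed, and since the first coordinate already pins down $x$ (resp.\ $y$) these are genuine sets of sizes $\sum_x|X(x)|$ and $\sum_y|Y(y)|$, so your multiset bookkeeping is unnecessary.
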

\begin{proof}
We consider the Sidon set $\A=\{(x,x^2):\ x\in \F_q\}$ and the sets
$$B=\{(x,x'):\ x'\in X(x)\},\qquad B'=\{(y,y'):\ y'\in Y(y)\}.$$ From the definition, $(x,x')+(y,y')\in \A \iff x'+y'=(x+y)^2$. Thus
$S=|\{(b,b')\in B\times B':\ b+b'\in \A\}|$ and we apply Theorem
\ref{BB}.
\end{proof}
\begin{cor}\label{co}Let $A_1,A_2,A_3,A_4\subset \F_q$.
Then, the number of solutions of  the equation
\begin{equation}\label{34}x_1+x_2=(x_3+x_4)^2,\qquad x_i\in A_i\end{equation} is
\begin{equation*}
S=\frac{|A_1||A_2||A_3||A_4|}q+\theta\sqrt{q|A_1||A_2||A_3||A_4|},\qquad
|\theta|\le 1.
\end{equation*}
In particular, the number of solutions of
\begin{equation}\label{zz}x_1+x_2=z^2,\quad x_1\in A_1,\ x_2\in A_2,\ z\in \F_q\end{equation} is
\begin{equation*}\label{111}|A_1||A_2|+\theta \sqrt{|A_1||A_2|q},\end{equation*} for some $\theta$ with $|\theta|\le
1$.
\end{cor}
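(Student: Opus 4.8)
The plan is to obtain both assertions directly from Theorem \ref{xxx}, whose counting problem $x'+y'=(x+y)^2$ with $x'\in X(x)$, $y'\in Y(y)$ becomes the equations at hand once we make the identification $x=x_3,\ y=x_4,\ x'=x_1,\ y'=x_2$ and choose the weight families $X(\cdot),Y(\cdot)$ appropriately. The only real work is in selecting these families so that the quantity $T=\bigl(\sum_x|X(x)|\bigr)\bigl(\sum_y|Y(y)|\bigr)$ comes out with the right size.

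First I would treat \eqref{34}. For each $x\in\F_q$ set $X(x)=A_1$ if $x\in A_3$ and $X(x)=\emptyset$ otherwise, and likewise $Y(y)=A_2$ if $y\in A_4$ and $Y(y)=\emptyset$ otherwise. Then $\sum_x|X(x)|=|A_1||A_3|$ and $\sum_y|Y(y)|=|A_2||A_4|$, so $T=|A_1||A_2||A_3||A_4|$, and a tuple counted by Theorem \ref{xxx} is precisely a solution of \eqref{34} with $x_i\in A_i$. Substituting this $T$ into the conclusion of Theorem \ref{xxx} gives the stated formula for $S$ at once, with the same $\theta$, $|\theta|\le1$. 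This step is purely bookkeeping.

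The delicate point, and the step I expect to be the main obstacle, is the ``in particular'' equation \eqref{zz}. The naive reduction — letting $z$ range freely, i.e.\ effectively taking $X(x)=A_1$ for all $x$ and $Y$ supported at a single point — counts the number $N:=|\{(x_1,x_2,z):x_1+x_2=z^2\}|$ directly, but with $T=q|A_1||A_2|$ it only yields the error $\sqrt{qT}=q\sqrt{|A_1||A_2|}$, which exceeds the main term $|A_1||A_2|$ and is therefore worthless. The fix I would use is an averaging trick: apply the already-proved formula for \eqref{34} with $A_3=A_4=\F_q$. Since every $z\in\F_q$ has exactly $q$ representations $z=x_3+x_4$, grouping solutions by the value $z=x_3+x_4$ gives the exact combinatorial identity $S=qN$. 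On the other hand the formula for \eqref{34} now reads $S=q|A_1||A_2|+\theta\sqrt{q\cdot q^2|A_1||A_2|}=q|A_1||A_2|+\theta q^{3/2}\sqrt{|A_1||A_2|}$, and dividing by $q$ produces $N=|A_1||A_2|+\theta\sqrt{q|A_1||A_2|}$ with the same $\theta$, $|\theta|\le1$, as claimed. The point to recognize is that the error in Theorem \ref{xxx} grows only like $\sqrt{T}$, so inflating $|A_3||A_4|$ from $1$ to $q^2$ multiplies the error by $q$ while the division by $q$ removes a full factor $q$; the net gain of $\sqrt q$ is exactly what turns the useless bound $q\sqrt{|A_1||A_2|}$ into the sharp $\sqrt{q|A_1||A_2|}$, matching the estimate one would otherwise extract from a quadratic-character sum.
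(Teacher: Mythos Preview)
Your proof is correct and follows exactly the paper's approach: the same choice of $X(x),Y(y)$ supported on $A_3,A_4$ for \eqref{34}, and the same reduction of \eqref{zz} to the case $A_3=A_4=\F_q$ via the observation that each $z$ has $q$ representations $z=x_3+x_4$, then dividing by $q$. Your added commentary on why the naive choice fails and how the averaging recovers a factor of $\sqrt q$ is a nice elaboration, but the argument itself is the paper's.
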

\begin{proof}
The first part of the statement follows from Theorem \ref{xxx} by taking $$X(x)=\begin{cases}A_1,\quad x\in A_3\\
\emptyset \quad \text{ otherwise}\end{cases}
\qquad  \text{and} \qquad Y(x)=\begin{cases}A_2,\quad x\in A_4\\
\emptyset \quad \text{ otherwise}.\end{cases}$$ The second part of
the statement follows from the fact that if $A_3=A_4=\F_q$ then each
solution of  \eqref{zz} corresponds to exactly $q$ solutions of
\eqref{34}.
\end{proof}
Shkredov \cite{S2} used Weil's bound for exponential sums with
multiplicative characters to prove the following result for $q=p$
prime and the condition $|X_1||X_2|>20 p$.

\begin{cor}Let $X_1,X_2\subset \F_q$,  $|X_1||X_2|>2q$. Then
there exist $x,y\in \F_q$ such that $x+y\in X_1$ and $xy\in X_2$.
\end{cor}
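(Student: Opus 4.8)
The plan is to connect the existence of a solution to $x+y \in X_1$, $xy \in X_2$ with the counting result already established in Corollary~\ref{co}. The key observation is that knowing $x+y = s$ and $xy = t$ determines $x,y$ as roots of the quadratic $w^2 - sw + t = 0$, and conversely any pair $(s,t)$ that is ``realizable'' (i.e. arises as the sum and product of some $x,y \in \F_q$) corresponds to at least one choice of $x,y$. So the task reduces to showing that there is a realizable pair $(s,t)$ with $s \in X_1$ and $t \in X_2$.

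First I would parametrize realizability. A pair $(s,t)$ arises as $(x+y, xy)$ for some $x,y \in \F_q$ precisely when $w^2 - sw + t$ has a root in $\F_q$, equivalently when the discriminant $s^2 - 4t$ is a square in $\F_q$ (including zero). Writing $z = x - y$, one has $s^2 - 4t = z^2$, so realizable pairs are exactly those of the form $s = x+y$, $t = \frac{s^2 - z^2}{4}$ with $z \in \F_q$; equivalently $4t = s^2 - z^2$, i.e. $z^2 = s^2 - 4t$. I would thus recast the problem as: find $s \in X_1$ and $t \in X_2$ with $s^2 - 4t$ a square.

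Next I would set up the count so as to invoke Corollary~\ref{co}. The natural move is to rescale: put $A_1 = X_1$ (playing the role giving $s$), and choose the sets so that the equation $x_1 + x_2 = z^2$ from \eqref{zz} encodes $s^2 - 4t = z^2$, i.e. $s^2 - 4t = z^2$ rewrites as $4t + z^2 = s^2$, or after absorbing $s$ into one variable. A clean route is to apply the second part of Corollary~\ref{co} with a suitable affine image of the sets: counting solutions of $x_1 + x_2 = z^2$ with $x_1$ ranging over (an affine copy of) $X_1$ and $x_2$ over (an affine copy of) $X_2$ and $z \in \F_q$ free, the corollary gives that this count equals $|X_1||X_2| + \theta\sqrt{|X_1||X_2|\,q}$ with $|\theta| \le 1$. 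Since $|X_1||X_2| > 2q$ forces $|X_1||X_2| > \sqrt{|X_1||X_2|\,q}$ (because $|X_1||X_2| > q$), the main term strictly dominates the error term, so the count is positive. A positive count produces the desired $s \in X_1$, $t \in X_2$, and $z \in \F_q$ with the discriminant equal to a square, and hence the desired $x,y$.

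\textbf{Main obstacle.} The step I expect to require the most care is the exact affine change of variables that turns ``$s^2 - 4t$ is a square'' into an instance of $x_1 + x_2 = z^2$ to which Corollary~\ref{co} applies verbatim. The discriminant $s^2 - 4t$ is quadratic in $s$ rather than linear, so one cannot simply take $x_1 = s$; instead I would absorb the square $s^2$ into the variable structure (for instance by applying the first form \eqref{34} of the corollary with $A_3, A_4$ chosen so that $x_3 + x_4$ ranges appropriately, or by working with the set $\{s^2 : s \in X_1\}$ and accounting for the two-to-one nature of squaring). Getting the sizes to line up correctly under this reduction, so that the hypothesis $|X_1||X_2| > 2q$ yields dominance of the main term over the error, is the delicate bookkeeping; once that is arranged, positivity of the count is immediate.
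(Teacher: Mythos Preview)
Your plan is essentially the paper's approach: reduce to the discriminant condition $s^2-4t=z^2$ (equivalently $(s/2)^2-t=z^2$) and invoke the count in \eqref{zz} from Corollary~\ref{co}. The ``main obstacle'' you flag is exactly the one the paper handles, and its resolution is short: split $X_1=X_{11}\cup X_{12}$ so that $x\mapsto (x/2)^2$ is injective on each piece, apply \eqref{zz} with $A_1=\{(x/2)^2:\ x\in X_{1i}\}$ and $A_2=-X_2$, and add. The combined error is at most $\sqrt{|X_{11}||X_2|q}+\sqrt{|X_{12}||X_2|q}\le \sqrt{2|X_1||X_2|q}$ by Cauchy--Schwarz, which is where the factor $2$ in the hypothesis $|X_1||X_2|>2q$ enters. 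Note that your middle paragraph, which derives positivity from $|X_1||X_2|>q$ alone, is premature precisely because the squaring loss has not yet been absorbed; once you pass to $\{(x/2)^2\}$ the set may have size only $|X_1|/2$, and the two-piece argument is what recovers the correct threshold.
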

\begin{proof}The number of such that pairs $(x,y)$ is equal to the number of solutions of the equation
$$ (x_1/2-z)(x_1/2+z)=x_2,\qquad x_1\in
X_1,\ x_2\in X_2,\ z\in \F_q.$$ We observe that this equation is
equivalent to the equation $(x_1/2)^2-x_2=z^2$. In order to apply
\eqref{zz} we split $X_1=X_{11}\cup X_{12}$ in such a way that the
squares in each set are all distinct. Then we apply  \eqref{111}
separately to $A_1=\{x_1^2/2:\ x_i\in X_{11}\},\ A_2=-X_2$ and to
$A_1=\{x_1^2/2:\ x_i\in X_{12}\},\ A_2=-X_2$. It follows  that the
number of solutions of the equation $(x_1/2)^2-x_2=z^2,\quad x_1\in
X_1,\ x_2\in X_2,\ z\in \F_q$ is positive in view of
$$|X_{11}||X_2|-\sqrt{|X_{11}||X_2|q}+|X_{12}||X_2|-\sqrt{|X_{12}||X_2|q}\ge |X_1||X_2|-\sqrt{2|X_1||X_2|q}>0.$$
\end{proof}

S\'{a}rk\"{o}zy \cite{Sar1,Sar2} using exponential sums, obtained asymptotic
formula for  the number of solutions of the congruences
$x_1x_2-x_3x_4\equiv \lambda \pmod p$ and $x_1x_2-x_3-x_4\equiv
\lambda \pmod p,\ x_i\in X_i$. In \cite{SG} these results have been
proved  in any finite fields. We derive Sark\H ozy's results
directly from our Theorem \ref{BB}.

\begin{thm}\label{x'+y'}For  $x\in \F_q^*,\ y\in \F_q^*$ let $X(x),Y(y)$ be any   subsets of $\F_q$.
Then for the number $S$ of solutions  of  the equation
$$x'+y'=xy,\quad x'\in X(x),\ y'\in Y(y)$$ we have
\begin{equation*}
S=\frac{T}q+\theta\sqrt{qT},\qquad |\theta|\le 1+o(1),
\end{equation*}
where $T=\Bigl (\sum_x|X(x)|\Bigr)\Bigl (\sum_x|Y(x)|\Bigr )$.
\end{thm}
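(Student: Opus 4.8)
The plan is to reduce the equation $x'+y'=xy$ to a count of the form $|\{(b,b')\in B\times B':\ b+b'\in\A\}|$ and then invoke Theorem \ref{BB}, exactly as in the proof of Theorem \ref{xxx}, but now using the Welch-type Sidon set of Example \ref{exam2} (whose multiplicative structure matches the product $xy$) in place of the parabola of Example \ref{exam1}. Concretely, I work in $G=\Z_{q-1}\times\F_q$ with the Sidon set $\A=\{(\log x,x):\ x\in\F_q^*\}$, and set
\begin{align*}
B &= \{(\log x, x'):\ x\in\F_q^*,\ x'\in X(x)\},\\
B' &= \{(\log y, y'):\ y\in\F_q^*,\ y'\in Y(y)\}.
\end{align*}
Since $\log$ is a bijection $\F_q^*\to\Z_{q-1}$, the first coordinate recovers $x$ (resp. $y$), so $B,B'$ are genuine sets with $|B|=\sum_x|X(x)|$ and $|B'|=\sum_y|Y(y)|$, hence $|B||B'|=T$.

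The heart of the matter is the correspondence. For $b=(\log x,x')\in B$ and $b'=(\log y,y')\in B'$ one has $b+b'=(\log x+\log y,\ x'+y')=(\log(xy),\ x'+y')$, using that $\log$ is a homomorphism and $xy\ne 0$. By the definition of $\A$ this lies in $\A$ exactly when $x'+y'=g^{\log(xy)}=xy$. Thus each solution of $x'+y'=xy$ with $x'\in X(x),\ y'\in Y(y)$ matches a unique pair $(b,b')$ with $b+b'\in\A$, so $S=|\{(b,b')\in B\times B':\ b+b'\in\A\}|$.

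Now apply Theorem \ref{BB}: here $|\A|=q-1$ and $|G|=q(q-1)$, so the main term is $\frac{|\A|}{|G|}|B||B'|=T/q$ and the error is $\theta\,T^{1/2}(q(q-1))^{1/4}$; since $(q(q-1))^{1/4}=q^{1/2}(1-1/q)^{1/4}<q^{1/2}$ this already has the advertised shape $T/q+\theta\sqrt{qT}$. The only real obstacle is the constant $|\theta|\le 1+o(1)$. Theorem \ref{BB} furnishes $|\theta|<\bigl(1+\frac{|B|}{|G|}\max(0,\delta)\bigr)(1-1/q)^{1/4}$, and for this Sidon set $\delta=\sqrt{q(q-1)}-(q-1)$ satisfies $\delta<1/2$ with $\delta\to 1/2$, the value recorded in Example \ref{exam2}. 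Because the problem is symmetric under interchanging $(X,x,x')$ with $(Y,y,y')$, I would let the smaller of $B,B'$ play the role of ``$B$'' in Theorem \ref{BB}, so that $\frac{|B|}{|G|}\max(0,\delta)<\frac12\frac{\min(|B|,|B'|)}{|G|}$; this is $o(1)$ in the regime $\min(|B|,|B'|)=o(|G|)$ flagged in the remark after Theorem \ref{BB}, giving $|\theta|\le 1+o(1)$. Reconciling this $\delta$-correction against the $(1-1/q)^{1/4}$ deficit, and checking that the asymptotic is uniform, is the one step I expect to need genuine care.
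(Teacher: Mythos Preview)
Your proof is correct and follows essentially the same route as the paper's: the same Sidon set $\A=\{(\log x,x):x\in\F_q^*\}=\{(t,g^t):t\in\Z_{q-1}\}$ in $G=\Z_{q-1}\times\F_q$, the same sets $B,B'$, the same identification of $S$ with $|\{(b,b')\in B\times B':b+b'\in\A\}|$, and then an appeal to Theorem~\ref{BB}. Your additional care with the constant (noting $\delta<1/2$, the $(1-1/q)^{1/4}$ deficit, and the option of swapping $B,B'$) is in fact more explicit than the paper, which simply invokes Theorem~\ref{BB} and the remark after it.
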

\begin{proof}
We consider the Sidon set $\A=\{(x,g^x):\ x\in \Z_{q-1}\}$ and the
sets $$B=\{(\log x,x'):\ x'\in X(x)\},\qquad B'=\{(\log y,y'):\
y'\in Y(y)\}.$$ We observe that $$(\log x,x')+(\log y,y')\in \A\iff
x'+y'=g^{\log x+\log y}=xy.$$ Thus $S=|\{(b,b')\in B\times B':\
b+b'\in \A\}|$ and then we apply Theorem \ref{BB}.
\end{proof}
\begin{cor}
Let $X_1,X_2\subset \F_q^*$ and $X_3,X_4\subset \F_q$. The number
$S$ of solutions of the equation $$x_1x_2=x_3+x_4,\quad x_i\in
X_i,$$ is
$$S= \frac{|X_1||X_2||X_3||X_4|}{q}+\theta\sqrt{|X_1||X_2||X_3||X_4|q},\qquad |\theta|\le 1+o(1).$$
\end{cor}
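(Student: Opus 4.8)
The plan is to deduce this corollary from Theorem~\ref{x'+y'} in exactly the same way Corollary~\ref{co} was deduced from Theorem~\ref{xxx}. The idea is that Theorem~\ref{x'+y'} already gives an asymptotic count for the equation $x'+y'=xy$ where $x'$ ranges over a set $X(x)$ depending on $x\in\F_q^*$ and $y'$ ranges over $Y(y)$ depending on $y\in\F_q^*$. To specialize to the four-variable equation $x_1x_2=x_3+x_4$ with $x_1,x_2\in X_1,X_2$ and $x_3,x_4\in X_3,X_4$, I would make the sets $X(x)$ and $Y(y)$ indicator-type sets that ``switch on'' only at the allowed values of the multiplicative variables.

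First I would set, for each $x\in\F_q^*$,
\begin{equation*}
X(x)=\begin{cases}X_3,& x\in X_1\\ \emptyset,& \text{otherwise},\end{cases}\qquad
Y(y)=\begin{cases}X_4,& y\in X_2\\ \emptyset,& \text{otherwise}.\end{cases}
\end{equation*}
With this choice a solution of $x'+y'=xy$ with $x'\in X(x),\ y'\in Y(y)$ is precisely a quadruple $(x,y,x',y')$ with $x\in X_1$, $y\in X_2$, $x'\in X_3$, $y'\in X_4$ and $x'+y'=xy$; that is, writing $x_1=x,\ x_2=y,\ x_3=x',\ x_4=y'$, exactly a solution of $x_1x_2=x_3+x_4$ with $x_i\in X_i$. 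So $S$ is the same quantity in both statements.

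It then remains to compute $T$ for this choice. Since $X(x)$ is nonempty (and equal to $X_3$) exactly when $x\in X_1$, we get $\sum_x|X(x)|=|X_1|\,|X_3|$, and similarly $\sum_y|Y(y)|=|X_2|\,|X_4|$, so $T=|X_1|\,|X_2|\,|X_3|\,|X_4|$. Substituting $T=|X_1||X_2||X_3||X_4|$ into the conclusion of Theorem~\ref{x'+y'} gives
\begin{equation*}
S=\frac{|X_1||X_2||X_3||X_4|}{q}+\theta\sqrt{|X_1||X_2||X_3||X_4|\,q},\qquad |\theta|\le 1+o(1),
\end{equation*}
which is the claimed formula.

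There is essentially no obstacle here: the only thing to be careful about is the bookkeeping in the reduction, namely checking that the correspondence between the two solution sets is a genuine bijection (that no quadruple is counted twice and none is missed), and confirming the arithmetic $T=|X_1||X_2||X_3||X_4|$. One mild point worth noting is that Theorem~\ref{x'+y'} requires the multiplicative variables to lie in $\F_q^*$, which is why the hypothesis $X_1,X_2\subset\F_q^*$ appears; the additive variables $x_3,x_4$ are allowed to lie in all of $\F_q$, matching $X_3,X_4\subset\F_q$. With these checks the corollary follows immediately from Theorem~\ref{x'+y'}.
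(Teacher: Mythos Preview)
Your proof is correct and matches the paper's approach exactly: the paper simply says to take $X(x)$ and $Y(y)$ as in Corollary~\ref{co} and apply Theorem~\ref{x'+y'}, which is precisely the indicator-type specialization you wrote out. Your careful check that the multiplicative variables must lie in $\F_q^*$ (hence the hypothesis $X_1,X_2\subset\F_q^*$) is the right bookkeeping and makes the correspondence with Theorem~\ref{x'+y'} clean.
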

\begin{proof}
We take $X(x)$ and $Y(y)$ as in Corollary \ref{co} and use Theorem
\ref{x'+y'}.
\end{proof}

\begin{cor}\label{xx-xx}
Let $X_1,X_2\subset \F_q^*$ and $X_3,X_4\subset \F_q$. The number
$S$ of solutions  of the equation $$x_2x_3-x_1x_4=1,\quad x_i\in
X_i,$$ is
$$S= \frac{|X_1||X_2||X_3||X_4|}{q}+\theta\sqrt{|X_1||X_2||X_3||X_4|q},\qquad |\theta|\le 1+o(1).$$
\end{cor}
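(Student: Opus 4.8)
Corollary \ref{xx-xx} asks for the number of solutions of $x_2x_3 - x_1x_4 = 1$ with $x_i \in X_i$, and the format of the answer—a main term $|X_1||X_2||X_3||X_4|/q$ with square-root error—is exactly what Theorem \ref{x'+y'} delivers. So the plan is to massage the equation into the shape $x'+y' = xy$ handled by that theorem.

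First I would rewrite $x_2x_3 - x_1x_4 = 1$ by dividing through by $x_1x_3$ (legitimate since $X_1, X_2 \subset \F_q^*$, so $x_1, x_2$ are units; I need $x_3$ to be a unit too, so I would restrict attention to $x_3 \ne 0$, noting that $x_3 = 0$ forces $-x_1 x_4 = 1$, a lower-order contribution I can absorb or handle separately). Dividing gives $\frac{x_2}{x_1} - \frac{x_4}{x_3} = \frac{1}{x_1 x_3}$, which I would prefer to recast as a product-equals-sum relation. A cleaner route: set the variables so that the multiplicative pair becomes $(x_1, x_3)$ (or their inverses) and the additive pair becomes $(x_2, x_4)$. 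Concretely, from $x_2 x_3 - x_1 x_4 = 1$ I would divide by $x_1 x_2$ (both units) to get $\frac{x_3}{x_1} - \frac{x_4}{x_2} = \frac{1}{x_1 x_2}$, or better, arrange the substitution $u = 1/x_1$, $v = 1/x_2$ so that the reciprocal product $uv = 1/(x_1x_2)$ plays the role of $xy$ in Theorem \ref{x'+y'}.

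The technically cleanest implementation mirrors the earlier corollaries: I would choose the Sidon set $\A = \{(x, g^x)\}$ and define the sets $B, B'$ by indexing over the multiplicative variables while recording the additive variables as the second coordinate, exactly as in Theorem \ref{x'+y'}. Following the template of Corollary immediately above (the $x_1x_2 = x_3+x_4$ case), I would set, for each $x \in \F_q^*$, the fiber sets $X(x)$ and $Y(y)$ supported on $X_1, X_2$ (for the multiplicative indices) and taking the values $X_3, X_4$ (for the additive indices), chosen so that the product structure $\sum_x |X(x)| = |X_1||X_3|$ and $\sum_y |Y(y)| = |X_2||X_4|$ yields $T = |X_1||X_2||X_3||X_4|$. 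Then Theorem \ref{x'+y'} immediately gives $S = T/q + \theta\sqrt{qT}$ with $|\theta| \le 1+o(1)$, which is the claim.

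The main obstacle is purely bookkeeping: getting the change of variables to line up the equation $x_2x_3 - x_1x_4 = 1$ with the normal form $x'+y' = xy$ without introducing spurious solutions or losing count when some variable is forced to be nonzero. The substitution is a bijection on the unit variables, so the counting is preserved exactly; the only care needed is the boundary case where an intended unit takes the value $0$, but since $X_1, X_2 \subset \F_q^*$ by hypothesis, the divisions by $x_1, x_2$ are always valid, and $X_3, X_4 \subset \F_q$ are allowed to contain $0$ with no difficulty because they only appear additively. I therefore expect the proof to reduce, in one line, to citing Theorem \ref{x'+y'} with the appropriate $X(x), Y(y)$, just as the preceding corollaries do.
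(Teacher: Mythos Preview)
Your approach is correct and matches the paper's: divide the equation by $x_1x_2$ (both units), substitute $u=1/x_1,\ v=1/x_2$, and invoke Theorem~\ref{x'+y'}. The paper makes explicit what your change of variables implicitly forces, namely
\[
X(x)=\begin{cases} xX_3,& x\in X_1^{-1}\\ \emptyset & \text{otherwise}\end{cases}
\qquad
Y(y)=\begin{cases} -yX_4,& y\in X_2^{-1}\\ \emptyset & \text{otherwise,}\end{cases}
\]
so that $x'+y'=xy$ reads $x_1^{-1}x_3-x_2^{-1}x_4=x_1^{-1}x_2^{-1}$, i.e.\ $x_2x_3-x_1x_4=1$. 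Note that the fibers are \emph{dilated} copies of $X_3,X_4$, not the constant copies used in the preceding corollary; your phrase ``taking the values $X_3,X_4$'' slightly undersells this, though your substitution $u=1/x_1,\ v=1/x_2$ does lead to it. Since dilation by a unit preserves cardinality, one still has $\sum_x|X(x)|=|X_1||X_3|$ and $\sum_y|Y(y)|=|X_2||X_4|$, giving $T=|X_1||X_2||X_3||X_4|$ as you claimed. Your initial detour through dividing by $x_1x_3$ (and worrying about $x_3=0$) is unnecessary and rightly abandoned.
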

\begin{proof}
In Theorem \ref{x'+y'} we take $$X(x)=\begin{cases} xX_3,\ x\in X_1^{-1}\\
\emptyset \quad \text{ otherwise}\end{cases}\qquad \text{and}\qquad
Y(y)=\begin{cases} -yX_4,\ y\in X_2^{-1}\\ \emptyset \quad \text{
otherwise}\end{cases}.$$  In this way we arrive at the equation
$x_1^{-1}x_2^{-1}=x_1^{-1}x_3-x_2^{-1}x_4$, which is equivalent to
the equation of the corollary.
\end{proof}
\begin{thm}
For  $x\in \F_q^*,\ y\in \F_q^*$, let  $X(x),Y(y)$ be any subsets of
$\F_q^*$. The number $S$ of solutions of the equation
$$xy-x'y'=1,\qquad x,y\in \F_q^*,\ x'\in X(x),\ y'\in Y(y),$$ is
$$S= \frac{T}q+\theta\sqrt{Tq},\qquad |\theta|\le 1+o(1), $$
where $T=\Bigl (\sum_x|X(x)|\Bigr)\Bigl (\sum_x|Y(x)|\Bigr )$.
\end{thm}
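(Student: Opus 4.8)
The plan is to follow the scheme of the previous proofs in this section and reduce the count to a single application of Theorem \ref{BB} for a suitable Golomb-type Sidon set. Since the equation involves a difference $xy-x'y'=1$ of two products, I would work in $G=\Z_{q-1}\times\Z_{q-1}$ and take the Sidon set
$$\A=\{(a,b)\in\Z_{q-1}\times\Z_{q-1}:\ g^a-g^b=1\},$$
whose Sidoness is exactly the content of Example \ref{exam3} with $g_1=g_2=g$ and $\lambda=1$. The point is that passing to discrete logarithms turns the products $xy$ and $x'y'$ into the sums $\log x+\log y$ and $\log x'+\log y'$, so the multiplicative condition $xy-x'y'=1$ becomes the condition that a sum of an element of one set and an element of another lies in $\A$.

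Concretely, I would introduce
$$B=\{(\log x,\log x'):\ x\in\F_q^*,\ x'\in X(x)\},\qquad B'=\{(\log y,\log y'):\ y\in\F_q^*,\ y'\in Y(y)\}.$$
For $b=(\log x,\log x')\in B$ and $b'=(\log y,\log y')\in B'$ one has $b+b'=(\log(xy),\log(x'y'))$, so $b+b'\in\A$ if and only if $g^{\log(xy)}-g^{\log(x'y')}=xy-x'y'=1$. Because $x\mapsto\log x$ is a bijection $\F_q^*\to\Z_{q-1}$, the assignment $(x,y,x',y')\mapsto(b,b')$ is a bijection between solutions of the equation and pairs $(b,b')\in B\times B'$ with $b+b'\in\A$; in particular $S=|\{(b,b')\in B\times B':\ b+b'\in\A\}|$. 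Moreover each pair $(x,x')$ with $x'\in X(x)$ gives a distinct element of $B$, so $|B|=\sum_x|X(x)|$ and likewise $|B'|=\sum_y|Y(y)|$, whence $|B||B'|=T$.

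It then remains to feed these data into Theorem \ref{BB}. Here $|G|=(q-1)^2$, $|\A|=q-2$ (the equation $g^a-g^b=1$ is solvable exactly when $g^a\ne 1$), and hence $\delta=|G|^{1/2}-|\A|=1$. The main term becomes $\tfrac{|\A|}{|G|}|B||B'|=\tfrac{q-2}{(q-1)^2}T=\tfrac{T}{q}(1+O(q^{-2}))$, while the fluctuation term is $(|B||B'|)^{1/2}|G|^{1/4}=T^{1/2}(q-1)^{1/2}=\sqrt{Tq}\,(1-o(1))$. The main obstacle is the bookkeeping for the constant: I must check that the gap between $\tfrac{|\A|}{|G|}$ and $\tfrac1q$, which contributes $\tfrac{T}{q(q-1)^2}=o(\sqrt{Tq})$ since $T\le(q-1)^4$ forces $T^{1/2}/(q^{3/2}(q-1)^2)\le q^{-3/2}$, together with the factor $(q-1)^{1/2}/q^{1/2}$, can both be absorbed into the error while keeping $|\theta|\le1+o(1)$. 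Theorem \ref{BB} provides $|\theta|<1+\tfrac{|B|}{|G|}\max(0,\delta)=1+\tfrac{|B|}{(q-1)^2}$, so the claimed bound $|\theta|\le1+o(1)$ follows in the regime $|B|=\sum_x|X(x)|=o(q^2)$ anticipated by the remark after Theorem \ref{BB}, which is the case of interest.
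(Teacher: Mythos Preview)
Your proposal is correct and follows exactly the paper's approach: the same Golomb-type Sidon set $\A=\{(a,b):g^a-g^b=1\}$ in $\Z_{q-1}\times\Z_{q-1}$, the same sets $B=\{(\log x,\log x'):x'\in X(x)\}$ and $B'=\{(\log y,\log y'):y'\in Y(y)\}$, and a direct application of Theorem~\ref{BB}. Your write-up is in fact more detailed than the paper's one-line ``apply Theorem~\ref{BB}'', and your explicit remark that the bound $|\theta|\le 1+o(1)$ relies on $|B|=o(|G|)$ (as in the comment following Theorem~\ref{BB}) is a fair observation that the paper leaves implicit.
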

\begin{proof}
We consider the Sidon set $\A=\{(x,y):g^x-g^y=1\}\subset
\Z_{q-1}\times \Z_{q-1}$ and the sets $B=\{(\log x,\log x'):\ x'\in
X(x)\}$ and $\ B'=\{(\log y,\log y'):\ y'\in Y(y)\}$. It is clear
that $S=|\{(b,b')\in B\times B':\ b+b'\in \A\}|$. Now we apply
Theorem \ref{BB}.
\end{proof}
We observe that this theorem also gives an alternative proof of
Corollary \ref{xx-xx} by taking $X(x)$ and $Y(y)$ as in Corollary
\ref{co}.

\section{Distribution of Sidon sets  and
applications} Let $\A$ be a Sidon set in $G$. For any set $B\subset
G$ we write $E_{\A}(B)=|\A\cap B|-\frac{|B||\A|}{|G|}$.

The following lemma and Lemma \ref{BB'} will be the main tools to
prove asymptotic estimates for $|\A\cap B|$ in a class of problems.
For simplicity we restrict ourselves to the cases when $\A$ is one
of the three Sidon sets described in Section 2.

\begin{lem}\label{E}Let $\A$ be one of the three Sidon sets described in section \S2 and $B\subset G$. For any set $C\subset G$, there exists $c\in C$
such that
$$|E_{\A}(B)|\le 2\Bigl (q\frac{|B|}{|C|}\Bigr )^{1/2}+|E_{\A}(B^c)|+|E_{\A}(B_c)|$$
where $B^c=B\setminus (B+c)$ and $B_c=(B+c)\setminus B$.
\end{lem}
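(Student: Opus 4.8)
The plan is to relate $E_\A(B)$ to the analogous quantities for the translate $B+c$ and the two ``edge'' sets $B^c,B_c$, and then to choose $c\in C$ so that the translate's contribution is controlled by the second-moment estimate already obtained in the proof of Theorem \ref{BB}.

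First I would record two elementary identities. Decomposing $B=(B\cap(B+c))\sqcup B^c$ and $B+c=(B\cap(B+c))\sqcup B_c$ and intersecting with $\A$ gives $|\A\cap B|-|\A\cap(B+c)|=|\A\cap B^c|-|\A\cap B_c|$. Since translation preserves cardinality, $|B+c|=|B|$, and moreover $|B^c|=|B_c|=|B|-|B\cap(B+c)|$; hence the normalizing terms $|B||\A|/|G|$ cancel in just the right way to give the exact identity $E_\A(B)=E_\A(B+c)+E_\A(B^c)-E_\A(B_c)$, and therefore $|E_\A(B)|\le |E_\A(B+c)|+|E_\A(B^c)|+|E_\A(B_c)|$ for every $c$. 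It then remains only to produce one $c\in C$ making $|E_\A(B+c)|$ small.

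For that I would use the observation $|\A\cap(B+c)|=r_{\A-B}(c)$, so that $E_\A(B+c)=r_{\A-B}(c)-|\A||B|/|G|$. Averaging the square over $c\in C$ and enlarging the sum to all of $G$ (legitimate since the summands are nonnegative and $C\subset G$),
\begin{equation*}
\frac{1}{|C|}\sum_{c\in C}E_\A(B+c)^2\le \frac{1}{|C|}\sum_{x\in G}\left(r_{\A-B}(x)-\frac{|\A||B|}{|G|}\right)^2,
\end{equation*}
and the inner sum over $G$ is bounded, by \eqref{Sc}, by $|B|(|\A|-1)+|B|^2(|G|-|\A|^2)/|G|$. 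Thus some $c\in C$ satisfies $E_\A(B+c)^2$ at most $\tfrac{1}{|C|}$ times this quantity.

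The remaining (and only delicate) point is the arithmetic check that this quantity is at most $4q|B|$, which yields $|E_\A(B+c)|\le 2(q|B|/|C|)^{1/2}$ and hence the lemma. Here one uses that for each of the three Sidon sets $\delta\le 1$ and $|G|^{1/2}\le q$: the first summand gives $|\A|-1<|G|^{1/2}\le q$, while for the second one estimates $|G|-|\A|^2=2\delta|G|^{1/2}-\delta^2\le 2|G|^{1/2}$, so that $|B|(|G|-|\A|^2)/|G|\le 2|B|/|G|^{1/2}\le 2|G|^{1/2}\le 2q$ using $|B|\le|G|$. Adding gives at most $3q<4q$ per factor of $|B|$, as required. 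The main obstacle is thus bookkeeping rather than conceptual: keeping the normalization terms aligned so the identity in the second step is exact, and tracking the roles of $\delta$ and of $q$ versus $|G|^{1/2}$ so that the constant $2$ and the clean factor $q$ inside the square root emerge.
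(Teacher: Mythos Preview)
Your proof is correct and follows essentially the same approach as the paper. The only organizational difference is that the paper averages $E_\A(B+c)$ over $c\in C$ and invokes Theorem~\ref{BB} on the pair $(B,C)$ to bound the resulting sum (then bounds the remaining average of $|\A\cap B|-|\A\cap(B+c)|$ by its maximum over $c$), whereas you fix the pointwise identity $E_\A(B)=E_\A(B+c)+E_\A(B^c)-E_\A(B_c)$ first and then pigeonhole on $c$ via the second-moment bound \eqref{Sc} extended from $C$ to all of $G$; since Theorem~\ref{BB} is itself derived from \eqref{Sc} by Cauchy--Schwarz, these are two packagings of the same inequality, and your explicit arithmetic with $\delta\le 1$ and $|G|^{1/2}\le q$ is exactly what is implicit in the paper's constant $2$.
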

\begin{proof}We have
\begin{align}\label{ss}
E_{\A}(B)=|\A \cap B|-\frac{|\A||B|}{|G|}=\frac 1{|C|}&\sum_{c\in
C}\Bigl (|\A\cap (B+c)|-\frac{|\A||B|}{|G|}\Bigr )\nonumber\\+\frac
1{|C|}&\sum_{c\in C}\Bigl (|\A \cap B|-|\A \cap (B+c)|\Bigr ).
\end{align}
We observe that \begin{equation}\label{first}\sum_{c\in C}\Bigl
(|\A\cap (B+c)|-\frac{|\A||B|}{|G|}\Bigr )=|\{(b,c)\in B\times C:\
b+c\in \A\}|-\frac{|\A||B||C|}{|G|}.\end{equation}

Hence, by Theorem \ref{BB}, the absolute value of this sum is
bounded
 by $2\Bigl (q|B||C|\Bigr )^{1/2}$.

Since $|B_c|=|B^c|$, for the second sum in \eqref{ss} we have
\begin{align*}|\A \cap B|-|\A \cap (B+c)|&=|\A \cap B_c|-|\A\cap
B^c|\\&=\left ( |\A \cap B_c|- \frac{|\A||B_c|}{|G|}\right )-\left
(|\A\cap B^c|- \frac{|\A||B^c|}{|G|}\right
)\\&=E_{\A}(B_c)-E_{\A}(B^c).
\end{align*}  Thus
\begin{align*}
\left |\frac 1{|C|}\sum_{c\in C}\Bigl (|\A \cap B|-|\A \cap
(B+c)|\Bigr )\right |\le  \max_{c\in C}\Bigl
(|E_{\A}(B_c)|+|E_{\A}(B^c)|\Bigr ).
\end{align*}
\end{proof}
In the special case when $B$ is a subgroup we can take $C=B$ and
then $B^c=B_c=\emptyset $ for any $c\in C$. Thus, in this case we
have
$$|E_{\A}(B)|\ll q^{1/2}.$$
As a corollary we obtain a well known result on the Fermat equation
in finite fields.
\begin{cor}Let $Q,Q'$  be subgroups of $\F_q^*$. We have
$$|\{(x,y)\in Q\times Q':\ x+y=1\}|=\frac{|Q||Q'|}q+O(\sqrt q).$$
In particular, if $p\gg (rs)^2$ the Fermat congruence $x^r+y^s\equiv
1\pmod p$ has nontrivial solutions.
\end{cor}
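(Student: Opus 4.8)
The plan is to realize the count as $|\A\cap B|$ for the Sidon set $\A=\{(a,b):\ g^a+g^b=1\}$ of Example \ref{exam3} (taking $g_1=g_2=g$ a fixed generator of $\F_q^*$) and a set $B$ that happens to be a \emph{subgroup} of $G=\Z_{q-1}\times\Z_{q-1}$, so that the sharpened estimate $|E_{\A}(B)|\ll q^{1/2}$ from the remark preceding the corollary applies directly. Since $\F_q^*$ is cyclic of order $q-1$ and $\log=\log_g$ is a group isomorphism onto $\Z_{q-1}$, the images $H=\log Q$ and $H'=\log Q'$ are subgroups of $\Z_{q-1}$ with $|H|=|Q|$ and $|H'|=|Q'|$. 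Writing $a=\log x$ and $b=\log y$, the condition $x+y=1$ becomes $g^a+g^b=1$, so
$$|\{(x,y)\in Q\times Q':\ x+y=1\}|=|\A\cap (H\times H')|=|\A\cap B|,\qquad B:=H\times H'.$$
The key point is that $B$ is a subgroup of $G$, being a product of subgroups, and this is precisely what unlocks the strong error bound: the remark gives $|E_{\A}(B)|\ll q^{1/2}$, that is, $|\A\cap B|=\tfrac{|B||\A|}{|G|}+O(\sqrt q)$.

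It then remains only to evaluate the main term. By Example \ref{exam3} the set $\A$ has $|\A|=q-2$ elements, while $|G|=(q-1)^2$ and $|B|=|Q||Q'|$. A short computation, using $\frac{q-2}{(q-1)^2}=\frac1q-\frac{1}{q(q-1)^2}$, gives
$$\frac{|B||\A|}{|G|}=|Q||Q'|\,\frac{q-2}{(q-1)^2}=\frac{|Q||Q'|}{q}-\frac{|Q||Q'|}{q(q-1)^2},$$
and since $|Q||Q'|\le (q-1)^2$ the last term is $O(1)$. Combining with the $O(\sqrt q)$ error yields the claimed asymptotic formula.

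For the Fermat application take $q=p$ prime and let $Q=\{x^r:\ x\in\F_p^*\}$ and $Q'=\{y^s:\ y\in\F_p^*\}$ be the subgroups of $r$-th and $s$-th powers, so that $|Q|=\frac{p-1}{\gcd(r,p-1)}\ge\frac{p-1}{r}$ and similarly $|Q'|\ge\frac{p-1}{s}$. Any pair $(u,v)\in Q\times Q'$ with $u+v=1$ lifts to a solution of $x^r+y^s\equiv1$ with $x,y\ne0$, hence a nontrivial solution of the congruence. By the formula just proved the number of such pairs is
$$\frac{|Q||Q'|}{p}+O(\sqrt p)\ge\frac{(p-1)^2}{rs\,p}+O(\sqrt p),$$
and the main term dominates the error as soon as $\frac{(p-1)^2}{rs\,p}\gg\sqrt p$, that is, $p\gg (rs)^2$. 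I expect the only genuine subtlety to be the bookkeeping in this last step — ensuring the implied constant in $O(\sqrt p)$ is absorbed by the margin in the hypothesis $p\gg(rs)^2$ — together with the initial verification that $B=H\times H'$ really is a subgroup of $G$; everything else reduces to routine substitution into the already-established subgroup estimate.
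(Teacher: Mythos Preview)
Your proposal is correct and follows exactly the paper's approach: realize the count as $|\A\cap B|$ for the Sidon set $\A=\{(a,b):g^a+g^b=1\}$ of Example~\ref{exam3} and the subgroup $B=\log Q\times\log Q'$ of $\Z_{q-1}\times\Z_{q-1}$, then invoke the remark that for subgroups $|E_{\A}(B)|\ll q^{1/2}$. In fact you are more explicit than the paper, which simply writes ``take $B=C=Q\times Q'$'' (tacitly identifying $Q,Q'$ with their logarithmic images) and leaves both the main-term computation and the Fermat deduction to the reader.
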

\begin{proof}
Consider the Sidon set $\A=\{(x,y):\ g^x+g^y=1\}$ and take
$B=C=Q\times Q'$.
\end{proof}

In applications, the strategy is to take a large set $C$  such that
$|B^c|$ and $|B_c|$ are small compared with $|B|$. This is possible
when $B$ has some specific regularity properties (subgroups,
cartesian product of arithmetic progressions, convex sets, etc.) We
illustrate our method with an example.
\begin{thm}\label{rr}Let $I,J\subset \Z_{p-1}$ be intervals.
For any positive integer $r$ we have
$$\{(x,y)\in I\times J:\ g^x-g^y\equiv \lambda \pmod p\}=\frac{|I||J|}{p}+4^r\theta\Bigl (\Bigl (\frac{|I||J|}{p^{3/2}}\Bigr )^{1/r}+1\Bigr )\sqrt p,$$
with $|\theta|\le 1$.
\end{thm}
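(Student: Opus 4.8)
The plan is to read the left-hand side as $|\A\cap B|$ for the Golomb-type Sidon set $\A=\{(x,y)\in\Z_{p-1}\times\Z_{p-1}:\ g^x-g^y=\lambda\}$ of Example \ref{exam3} and the box $B=I\times J$, so that here $q=p$, $|G|=(p-1)^2$ and $|\A|=p-2$. Since $\frac{|\A|}{|G|}=\frac1p+O(p^{-3})$, the quantity to control is exactly $E_{\A}(B)=|\A\cap B|-\frac{|\A||B|}{|G|}$: the main term $\frac{|\A||B|}{|G|}$ agrees with $\frac{|I||J|}{p}$ up to an $O(|I||J|/p^{3})$ error that is swallowed by the stated error term. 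Thus everything reduces to proving $|E_{\A}(B)|\le 4^{r}\bigl((|I||J|/p^{3/2})^{1/r}+1\bigr)\sqrt p$.

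I would obtain this by iterating Lemma \ref{E} exactly $r$ times. The structural point that makes the iteration close up is that the class of products of arcs is preserved: if $B'=I'\times J'$ is a product of two arcs in $\Z_{p-1}$ and $c=(c_1,c_2)$, then writing $I'=I_1\sqcup I_2$ and $J'=J_1\sqcup J_2$ with $I_2=I'\cap(I'+c_1)$ and $J_2=J'\cap(J'+c_2)$, one gets $B'\setminus(B'+c)=(I_1\times J')\sqcup(I_2\times J_1)$, again a disjoint union of products of arcs (and symmetrically for $(B'+c)\setminus B'$). Since $E_{\A}$ is additive over disjoint unions, each application of Lemma \ref{E} replaces one box by a main term $2(p|B'|/|C|)^{1/2}$ plus the errors $E_{\A}$ of at most four strictly smaller boxes, in each of which one side has been shrunk to at most the width of the chosen shift set $C$. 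Taking $C=[0,k)\times[0,k)$ at each level and letting $k$ decrease geometrically, I would try to arrange that (i) the boxes lose area by a fixed factor $(p^{3/2}/|I||J|)^{1/r}$ per level, so that every main term is $\le 2(|I||J|/p^{3/2})^{1/r}\sqrt p$ (for a square box the two requirements on $k$ are compatible and give precisely this), and (ii) after $r$ levels every surviving box is \emph{thin}, with one side of length $\le\sqrt p$.

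The base case is then easy, and this is why thin leaves are the right target: $\A$ is essentially the graph of $x\mapsto\log(g^{x}-\lambda)$, so each vertical line meets $\A$ at most once. Hence for a thin box $P=I'\times J'$ with $\min(|I'|,|J'|)\le\sqrt p$ one has $|\A\cap P|\le\sqrt p$, while $\frac{|\A||P|}{|G|}\le\frac{|I'||J'|}{p}\le\sqrt p$ because the other side is at most $p-1$; both numbers lie in $[0,\sqrt p]$, so $|E_{\A}(P)|\le\sqrt p$ on every leaf (note the lower bound comes for free here, unlike for fat boxes). Summing the recursion over the tree — at most $(4^{r}-1)/3$ internal nodes, each carrying a main term $\le 2(|I||J|/p^{3/2})^{1/r}\sqrt p$, and at most $4^{r}$ leaves, each carrying an error $\le\sqrt p$ — telescopes to the claimed bound $4^{r}\bigl((|I||J|/p^{3/2})^{1/r}+1\bigr)\sqrt p$.

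The main obstacle is exactly points (i) and (ii): calibrating the widths $k$ at the successive levels so that the per-node main terms stay below $(|I||J|/p^{3/2})^{1/r}\sqrt p$ while the boxes become thin after \emph{exactly} $r$ levels. The subtlety is that a single application of Lemma \ref{E} with a square $C$ shrinks only one side of each child, so the boxes elongate as the recursion descends; the main term $2(p|B'|/|C|)^{1/2}$ then picks up a factor $\sqrt{\text{aspect ratio}}$ relative to the square case, and controlling this forces a careful policy of reducing the \emph{longer} side preferentially and tracking the aspect ratios throughout. Pinning down this multi-scale bookkeeping so that the constant comes out as $4^{r}$ rather than some larger $C^{r}$ is the delicate part; everything else follows routinely from Lemma \ref{E} and the graph structure of $\A$.
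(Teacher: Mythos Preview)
Your setup is exactly the paper's: the Sidon set $\A=\{(x,y):g^{x}-g^{y}=\lambda\}$, the box $B=I\times J$, and the reduction to bounding $E_{\A}(B)$ via Lemma~\ref{E}. But the paper organizes the argument differently, and in a way that dissolves precisely the obstacle you flag.

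First, the base case. The paper does \emph{not} push down to thin boxes. Instead, for $r=1$ it applies Lemma~\ref{BB'} directly with $B'=B$: since $B$ is a box, $|B+B|\le 4|B|$, so
\[
|\A\cap B|\le \frac{|B+B||\A|}{|G|}+2\sqrt{\frac{|B+B|}{|B|}}\,|G|^{1/4}\le \frac{4|I||J|}{p}+4\sqrt{p},
\]
and then $|E_{\A}(B)|\le\max\bigl(\tfrac{|\A||B|}{|G|},\,|\A\cap B|\bigr)\le 4\bigl(\tfrac{|I||J|}{p^{3/2}}+1\bigr)\sqrt{p}$, which is the $r=1$ bound. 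Your thin-box base case, while correct in itself, does not mesh with an arbitrary parameter $r$: for small $r$ you cannot reach side length $\le\sqrt p$ in $r$ steps while keeping the per-node main terms under $(|I||J|/p^{3/2})^{1/r}\sqrt p$.

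Second, and this is what removes your aspect-ratio headache, the paper takes $C$ \emph{proportional} to $B$, not square: $C=\{0,\dots,\lfloor\alpha|I|\rfloor\}\times\{0,\dots,\lfloor\alpha|J|\rfloor\}$. Then $|C|\ge\alpha^{2}|B|$, the main term in Lemma~\ref{E} is $\le 2\sqrt{p}/\alpha$, and the four child rectangles $B_1,\dots,B_4$ (your decomposition of $B^{c}$ and $B_{c}$ is the same) each have area $\le\alpha|I||J|$. The proof is then a clean induction: assuming the bound for $r$, apply it to each $B_i$ and optimise $\alpha$. With $\alpha=4^{-r}(p^{3/2}/|I||J|)^{1/(r+1)}$ the two pieces balance and the constant comes out exactly $4^{r+1}$. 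No aspect ratios ever appear, because the shape of $C$ tracks the shape of $B$.

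So the missing idea is: induct on $r$ rather than unrolling, use Lemma~\ref{BB'} (exploiting $|B+B|\le 4|B|$) for the base, and take $C$ homothetic to $B$ at each step.
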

\begin{proof}We proceed by induction on $r$. We consider the Sidon set $\A=\{(x,y):\ g^x-g^y=\lambda
\}\subset \Z_{p-1}\times \Z_{p-1}$ and the set $B=I\times J$. Then,
applying Lemma \ref{BB'}, we get
$$|\A\cap B|\le \frac{|B+B|}{p}+2\sqrt{p\frac{|B+B|}{|B|}}\le
\frac{4|I||J|}p+4\sqrt p.$$ Since $|E_{\A}(B)|\le
\max(\frac{|B||\A|}{|G|},|\A\cap B|)$, we have that $|E_{\A}(B)|\le
\frac{4|I||J|}p+4\sqrt p$, which proves Theorem \ref{rr} for $r=1$.
Now we assume that Theorem \ref{rr} is true for some $r$ and we
proved it for $r+1$.

We consider the auxiliar set $C=I'\times J'$ where $I'=\{0,\dots,
\lfloor \alpha|I|\rfloor \}$ and $J'=\{0,\dots, \lfloor
\alpha|J|\rfloor \}$ for a suitable $\alpha$. We observe that
$|C|\ge \alpha^2|I||J|$. Lemma \ref{E} gives $|E_{\A}(B)|\le
2\frac{p^{1/2}}{\alpha}+|E_{\A}(B^c)|+|E_{\A}(B_c)|$. Now we observe
that $B+c$ is a small translation of the rectangle $B=I\times J$.
Thus we can write $B^c=B_1\cup B_2$ and $B_c=B_3\cup B_4$ where the
sets $B_i$ are rectangles with $|B_i|\le \alpha |I||J|$.

Thus, $|E_{\A}(B)|\le
2\frac{p^{1/2}}{\alpha}+|E_{\A}(B_1)|+|E_{\A}(B_2)|+|E_{\A}(B_3)|+|E_{\A}(B_4)|.$

Taking into account the induction hypothesis for each $B_i$ we have
$$|E_{\A}(B)|\le 2\frac{p^{1/2}}{\alpha}+4\theta_r\Bigl (\Bigl (\frac{\alpha
|I||J|}{p^{3/2}}\Bigr )^{1/r}+1\Bigr )\sqrt p.
$$ Taking $\alpha=\frac 1{4^r}\left (\frac{p^{3/2}}{|I||J|}\right )^{1/(r+1)}$ we obtain the required estimate.
\end{proof}

It should be mentioned that, for the particular case $|I|=|J|$,
Garaev obtained the error term $O(|I|^{2/3}\log^{2/3}
(|I|p^{-3/4}+2)+p^{1/2})$. We note that the error term in Theorem
\ref{rr} is smaller than Garaev's error term. Furthermore, in the
range $p^{3/2}\ll|I||J|\ll p^{3/2}(\log p)^{\log \log p}$ our error
term is smaller than the error term  $O(p^{1/2}\log^2 p)$
established in \cite{RZ}. For arbitrary intervals, Theorem \ref{rr}
gives $O_{\epsilon}(p^{1/2+\epsilon})$ for any $\epsilon
>0$, which is only slightly weaker than $O(p^{1/2}\log^2 p)$.

Finally, we remark that the analogy of Theorem \ref{rr} also holds
for some other problems of similar flavor, like estimating $\{x\in
I:\ x^2\in J\}$ or $\{x\in I:\ g^x\in J\}$. These are achieved by
employing suitable Sidon sets.

\

\textbf{Acknowledgments:} Parts of this work have been presented in
several places: Conference on Arithmetic Combinatorics (IAS
Princeton 2007), Primera Reuni\'{o}n Conjunta Sociedad Matem\'{a}tica
Mexicana-Real Sociedad Matem\'{a}tica Espa\~{n}ola (Oaxaca, M\'{e}xico, 2009)
and Discrete Analysis Seminar (University of Cambridge, 2010). I
would like to thank these institutions for their hospitality. Also I
want to thank Igor Shparlinsky and M. Garaev  for useful
conversations about this work.

\end{document}